\def\R{\mathbb{R}}
\newtheorem{defi}{Definition}
\newtheorem{lem}{Lemma}
\newtheorem{prop}{Proposition}
\newtheorem{rem}{Remark}
\newtheorem{theo}{Theorem}
\title{Results and questions on  a nonlinear approximation approach for solving
  high-dimensional partial differential equations}
\author{C. Le Bris$^1$, T. Leli\`evre $^1$ \& Y. Maday $^2$\\
{\footnotesize $^1$ CERMICS, \'Ecole des Ponts,}\\
{\footnotesize 6 \& 8, avenue Blaise Pascal, 77455 Marne-La-Vall\'ee
  Cedex 2, FRANCE, and}\\
{\footnotesize INRIA Rocquencourt, MICMAC project-team,}\\
 {\footnotesize Domaine de Voluceau,  B.P. 105,
 78153 Le Chesnay Cedex, FRANCE}\\
{\footnotesize \tt \{lebris,lelievre\}@cermics.enpc.fr}\\
{\footnotesize $^2$ Laboratoire J.L.-Lions, Universit\'e Pierre et Marie
   Curie,}\\
{\footnotesize Boite courrier 187, F-75252 Paris, FRANCE}\\
{\footnotesize \tt maday@ann.jussieu.fr}
}
\begin{document}

\maketitle

\begin{abstract}
 We  investigate  mathematically a
nonlinear approximation type approach recently introduced
in~\cite{ammar-mokdad-chinesta-keunings-06} to solve  high dimensional
partial differential equations. We show the link between the approach
and the \emph{greedy algorithms}
of approximation theory studied {\it e.g.}
  in~\cite{devore-temlyakov-96}. On the prototypical case of the
  Poisson equation, we show that a variational version of the approach,
   based on minimization of energies, converges. On the other
  hand, we show various theoretical and numerical difficulties arising
  with the non variational version of the approach, consisting of simply
  solving the first order optimality equations of the problem. Several unsolved issues
  are indicated in order to motivate further research.
\end{abstract}
%%%%%%%%%%%%%%%%%%%%%%%%%%%%%%%%%%%%%%%%%%%%%%%%%%%%%%%%%%%%%%%%%%%%%%%%%

\section{Introduction}

Our purpose here is to investigate  mathematically a
numerical approach recently introduced
in~\cite{ammar-mokdad-chinesta-keunings-06} to solve  high dimensional
partial differential equations.

The approach is a nonlinear approximation type approach that consists in 
expanding the solution of the equation in
tensor products of functions sequentially determined as the iterations
of the algorithm proceed. The original motivation of the approach is
the wish of its authors to solve high-dimensional Fokker-Planck type
equations arising in the modelling of complex fluids. Reportedly, the
approach performs well in this case, and, in addition, extends 
to a large variety of
partial differential equations, static or time-dependent, linear or
nonlinear, elliptic or parabolic, involving self-adjoint or non self-
adjoint operators provided the data enjoy
some appropriate separation property  with respect to the different
coordinates (this property is made precise in Remark~\ref{rem:RHS} below). We refer the reader to~\cite{ammar-mokdad-chinesta-keunings-06} for more
details.

In the present contribution focused on mathematical analysis, we restrict ourselves to the simplest
possible case, namely the solution of the Poisson equation set with
Dirichlet homogeneous
boundary conditions  on a two dimensional parallelepipedic domain  $\Omega=\Omega_x
\times \Omega_y$ with $\Omega_x \subset
\R$ and $\Omega_y \subset
\R$  bounded.
In short, the approach under consideration then determines the solution $u$ to
\begin{equation}
\label{eq:11}
-\Delta u(x,y)=f(x,y)
\end{equation}
as a sum
\begin{equation}
\label{eq:22}
u(x,y)=\sum_{n\geq 1}r_n(x)\,s_n(y),
\end{equation}
by iteratively determining functions $r_n(x)$, $s_n(y)$, $n\geq 1$ such
that for all $n$, $r_n(x)\,s_n(y)$ is the best approximation (in a sense
to be made precise below) of the solution $v(x,y)$ to
$ -\Delta
v(x,y)=f(x,y)+\Delta \left(\sum_{k\leq n-1}r_k(x)\,s_k(y)\right)
$ in terms of one single
tensor product $r(x)s(y)$.
We show that it is possible to give  a sound mathematical
ground to the approach \emph{provided} we consider a variational form of the approach that
manipulates minimizers of energies instead of solutions to equations. In 
order to reformulate the approach in
such a variational setting, our arguments thus crucially exploit the
fact that the Laplace operator is self-adjoint. It is to be already
emphasized that, because of the nonlinearity of
the tensor product expansion (\ref{eq:22}),  the variational form of the approach is \emph{not} equivalent
to the form (\ref{eq:11})-(\ref{eq:22}) (which is exactly the Euler-Lagrange equations associated to the energy considered in the variational approach). Our analysis therefore does
not apply to the actual implementation of the method as described in~\cite{ammar-mokdad-chinesta-keunings-06}. At present time,
we do not know how to extend our arguments to cover the practical
situation, even in the simple  case of the Poisson problem. The
consideration of some particular pathological cases, theoretically and
numerically, shows that the appropriate mathematical setting is
unclear.  Likewise, it is unclear to us how to provide a mathematical
foundation of the approach for non variational situations, such as an equation involving a  differential operator that is not self-adjoint.

On the other hand, the analysis provided here
straightforwardly extends to the case of a $N$-dimensional
 Poisson problem with $N\geq 3$ (unless explicitly mentioned). Likewise,
 our analysis extends to the case of  elliptic linear partial differential equations set on a cylinder in
$\R^N$, with appropriate boundary conditions. The only, although substantial, difficulty that may appear when the dimension $N$ grows is the algorithmic complexity of the
approach, since a set of $N$ coupled non-linear equations has to be solved (see Remark~\ref{rem:HD}). At least, the number of unknowns involved in the systems to be solved 
does not grow exponentially, as it would be the case for a naive approach (like for a finite differences method on a tensorized grid).  This is
not the purpose of the present article to further elaborate
on this.

\medskip

Our article is organized as follows. Section~\ref{sec:presentation} introduces the approach. The  variational version of the approach
(along with a relaxed variant of it) is described in
Section~\ref{sec:algo}. Elementary properties follow in Sections~\ref{sec:well} and \ref{sec:EL}. The non variational version is presented in
Section~\ref{sec:nonvar}. In
Section~\ref{sec:convergence} we show the convergence of the variational
approach and give an estimate of the rate of convergence. Our arguments
immediately follow from standard arguments  of the literature of \emph{nonlinear
approximation theory}, and especially from those of~\cite{devore-temlyakov-96}. The particular approach under consideration
is indeed closely related  to the so-called \emph{greedy algorithms}
introduced in approximation theory. We refer
to~\cite{barron-cohen-dahmen-devore-08,davis-mallat-avellaneda-97,temlyakov-08}
for some relevant contributions, among many. The  purpose of
Section~\ref{sec:discussion} is to return to the original non
variational formulation of the approach. For  illustration,
we first consider the case when the Laplace operator $-\Delta$ in
(\ref{eq:11}) is replaced by the identity operator. The approach then
reduces to the determination of the \emph{Singular Value Decomposition} (also called \emph{rank-one decomposition}) of the
right-hand side $f$. This simple situation allows one to understand
various difficulties inherent to the non variational formulation of the approach. We
then discuss the actual case of the Laplace operator, and present some
intriguing numerical experiments, in particular when a non-symmetric
term (namely there an advection term) is added.

\medskip

As will be clear from the sequel, our current mathematical understanding 
of the
numerical approach is rather incomplete. Our results do not cover real
practice. Some ingredients from the  literature of nonlinear approximation
theory nevertheless already allow for understanding some basics of the
approach. It is the hope of the authors that, laying some groundwork, the present contribution
will sparkle some interest among the experts, and allows in a not too
far future for a
complete understanding of the mathematical nature of the approach. Should the need
arise, it will also indicate possible improvements of the approach so
that it is rigorously founded  mathematically and, eventually,
performs even  better that the currently existing reports seemingly show.

\medskip

{\bf Acknowledgments}: The authors wish to thank A.~Ammar and F.~Chinesta for introducing them
to their series of works initiated
in~\cite{ammar-mokdad-chinesta-keunings-06},  A.~Cohen for stimulating
discussions, and
A.~Lozinski for pointing out reference~\cite{devore-temlyakov-96}. This work was
completed while the first author (CLB) was a long-term visitor at the
Institute for Mathematics and its Applications, Minneapolis. The
hospitality of this institution is gratefully acknowledged.

%%%%%%%%%%%%%%%%%%%%%%%%%%%%%%%%%%%%%%%%%%%%%%%%%%%%%%%%%%%%%%%%%%%%%%%%%

\section{Presentation of the algorithms}
\label{sec:presentation}

Consider a function $f \in L^2(\Omega)$ where $\Omega=\Omega_x
\times \Omega_y$ with $\Omega_x \subset
\R$ and $\Omega_y \subset
\R$ two bounded domains. To fix ideas, one may take
$\Omega_x=\Omega_y=(0,1)$. Consider on $\Omega$ the following homogeneous
Dirichlet problem:
\begin{equation}\label{eq:lapl}
\text{Find $g \in H^1_0(\Omega) $ such that }\left\{
\begin{array}{rl}
-\Delta g = f &\text{ in $\Omega$},\\
g=0& \text{ on $\partial \Omega$}.
\end{array}
\right.
\end{equation}
It is well known that solving~\eqref{eq:lapl} is equivalent to solving the variational problem:
\begin{equation}\label{eq:lapl_var}
\text{Find $g \in H^1_0(\Omega)$ such that } g=\arg\min_{u \in H^1_0(\Omega)} \left(
  \frac{1}{2}\int_\Omega |\nabla u|^2 - \int_\Omega f u \right).
\end{equation}
In the following, for any function $u \in H^1_0(\Omega)$, we denote
\begin{equation}\label{eq:ener}
{\mathcal E}(u)=\frac{1}{2}\int_\Omega |\nabla u|^2 - \int_\Omega f u.
\end{equation}
Notice that
\begin{equation}\label{eq:ener_polar}
{\mathcal E}(u)=\frac{1}{2}\int_\Omega |\nabla (u - g)|^2 -
\frac{1}{2}\int_\Omega |\nabla g|^2
\end{equation}
where $g$ is defined by~\eqref{eq:lapl}, so that minimizing ${\mathcal
  E}$ is equivalent to minimizing $\int_\Omega |\nabla (u - g)|^2$ with
respect to $u$. We endow the functional space
$H^1_0(\Omega)$ with the scalar product:
$$\langle u , v \rangle = \int_\Omega \nabla u \cdot  \nabla v,$$
and the associated norm
$$\| u \|^2= \langle u , u \rangle =  \int_\Omega |\nabla u|^2.$$

\subsection{Two algorithms}
\label{sec:algo}

We now introduce two algorithms to solve~\eqref{eq:lapl}. The first algorithm is the \emph{Pure Greedy Algorithm}: {\tt set $f_0=f$, and at iteration
$n \ge 1$,
\begin{enumerate}
\item Find $r_n \in H^1_0(\Omega_x)$ and $s_n \in H^1_0(\Omega_y)$ such
  that
\begin{equation}\label{eq:LRA_var}
(r_n,s_n)=\arg \min_{(r,s) \in H^1_0(\Omega_x) \times H^1_0(\Omega_y)} 
\left(
  \frac{1}{2}\int_\Omega |\nabla (r \otimes s)|^2 - \int_\Omega f_{n-1}  
\,r \otimes s \right).
\end{equation}
\item Set $f_{n}=f_{n-1} + \Delta (r_n \otimes s_n)$.
\item If $\|f_{n}\|_{H^{-1}(\Omega)} \ge \varepsilon$, proceed to iteration $n+1$. Otherwise, stop.
\end{enumerate}
}
Throughout this article, we denote by $r \otimes s$ the tensor product: $r \otimes s(x,y)=r(x)
s(y)$. Notice that $$f_n=f + \Delta \left( \sum_{k=1}^{n} r_k \otimes s_k \right).$$
The fonction $f_n$ belongs to $H^{-1}(\Omega)$ and the tensor product $r 
\otimes s$
is in $H^1_0(\Omega)$ if $r \in H^1_0(\Omega_x)$ and $s \in
H^1_0(\Omega_y)$ (see Lemma~\ref{lem:H10} below), so that the integral $\int_\Omega f_{n-1}  \,r \otimes s$
in~\eqref{eq:LRA_var} is well defined.

A variant of this algorithm is the \emph{Orthogonal Greedy Algorithm}:
{\tt set $f_0^o=f$, and at iteration
$n \ge 1$,
\begin{enumerate}
\item Find $r_n^o \in H^1_0(\Omega_x)$ and $s_n^o \in H^1_0(\Omega_y)$ such
  that
\begin{equation}\label{eq:LRA_varo}
(r_n^o,s_n^o)=\arg\min_{(r,s) \in H^1_0(\Omega_x) \times H^1_0(\Omega_y)} \left(
  \frac{1}{2}\int_\Omega |\nabla (r \otimes s)|^2 - \int_\Omega f_{n-1}^o  \,r \otimes s \right).
\end{equation}
\item Solve the following Galerkin problem on the basis $(r_1^o \otimes
  s_1^o, \ldots, r_n^o \otimes  s_n^o )$: find $(\alpha_1, \ldots,
  \alpha_n) \in \R^n$ such that
\begin{equation}\label{eq:LRA_galo}
(\alpha_1, \ldots, \alpha_n)=\arg\min_{(\beta_1, \ldots,
  \beta_n) \in \R^n} \left(
  \frac{1}{2}\int_\Omega \left|\nabla \left( \sum_{k=1}^n \beta_k r_k^o \otimes
  s_k^o \right) \right|^2 - \int_\Omega f \,\sum_{k=1}^n \beta_k r_k^o 
\otimes
  s_k^o \right) .
\end{equation}
\item Set $f_{n}^o=f + \Delta \left( \sum_{k=1}^n \alpha_k r_k^o \otimes s_k^o \right)$.
\item If $\|f_{n}^o\|_{H^{-1}(\Omega)} \ge \varepsilon$, proceed to iteration $n+1$. Otherwise, stop.
\end{enumerate}
}
Let us also introduce $g_n$ satisfying the Dirichlet problem:
\begin{equation}\label{eq:g_n}
\left\{
\begin{array}{rl}
-\Delta g_n = f_n & \text{ in $\Omega$},\\
g_n=0 & \text{ on $\partial \Omega$}.
\end{array}
\right.
\end{equation}
Notice that
\begin{equation}\label{eq:g_np1}
g_{n}=g_{n-1} - r_n \otimes s_n.
\end{equation}
so that $g_n=g - \sum_{k=1}^{n} r_k \otimes s_k$.
Likewise, we introduce $g_n^o=g - \sum_{k=1}^{n} r_k^o \otimes s_k^o$,
which satisfies $-\Delta g_n^o = f_n^o$ in $\Omega$ and $g_n^o=0$ on
$\partial \Omega$. Proving the convergence of the algorithms amounts to proving that $g_n$ and $g_n^o$ converge to $0$.

The terminology \emph{Pure Greedy Algorithm} and \emph{Orthogonal Greedy
  Algorithm} is borrowed from approximation theory
(see~\cite{barron-cohen-dahmen-devore-08,davis-mallat-avellaneda-97,devore-temlyakov-96,temlyakov-08}).
Such algorithms have been introduced in a more general
framework, namely for an arbitrary Hilbert space and an arbitrary set of functions (not
only tensor products). Recall for consistency that, in short, the purpose of such nonlinear
approximations techniques is to find the best possible approximation of  
a given function as a  sum of
elements of a prescribed \emph{dictionary}. The latter does not need to
have a vectorial structure. In the present case, the dictionary is the
set of simple products $r(x)s(y)$ for $r$ varying in
$H^1_0(\Omega_x)$ and $s$ varying in
$H^1_0(\Omega_y)$ (All this will be formalized with the introduction of
the space ${\mathcal L}^1$  in Section~\ref{sec:convergence} below). The
metric chosen to define the approximation is the natural metric
induced by the differential operator, here the $H^1$ norm. The algorithm 
proposed by Ammar {\em et
  al}.~\cite{ammar-mokdad-chinesta-keunings-06} is actually  related to
the Orthogonal Greedy Algorithm: it consists in replacing the
optimization procedure~\eqref{eq:LRA_varo} by the associated
Euler-Lagrange equations. We shall give details on this in
Section~\ref{sec:EL} below. For the moment, we concentrate ourselves on
the variational algorithms above.

\subsection{The iterations are well defined}
\label{sec:well}

We will need the following three lemmas.

\begin{lem}\label{lem:H10}
For any measurable functions $r : \Omega_x \to \R$ and $s : \Omega_y \to
\R$ such that $r \otimes s \neq 0$
$$ r \otimes s \in H^1_0(\Omega) \iff r \in H^1_0(\Omega_x) \text{ and } 
s \in H^1_0(\Omega_y).$$
\end{lem}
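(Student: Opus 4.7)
The plan is to prove both implications separately.

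The ``$\Leftarrow$'' direction is by density: if $r \in H^1_0(\Omega_x)$ and $s \in H^1_0(\Omega_y)$, pick $r_k \in C^\infty_c(\Omega_x)$ and $s_k \in C^\infty_c(\Omega_y)$ approximating $r$ and $s$ in their respective $H^1$ norms. Then $r_k \otimes s_k \in C^\infty_c(\Omega) \subset H^1_0(\Omega)$, and they converge to $r \otimes s$ in $H^1(\Omega)$, using the elementary bound $\|u\otimes v\|_{H^1(\Omega)} \le \|u\|_{H^1(\Omega_x)}\|v\|_{H^1(\Omega_y)}$ (derived from $\nabla(u\otimes v) = (u'\otimes v,\, u\otimes v')$ combined with Fubini) together with the telescoping decomposition $r_k \otimes s_k - r\otimes s = (r_k - r)\otimes s_k + r\otimes (s_k - s)$. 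Hence $r\otimes s \in H^1_0(\Omega)$.

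For the ``$\Rightarrow$'' direction, assume $r\otimes s \in H^1_0(\Omega)$ and $r\otimes s \not\equiv 0$. First, Fubini applied to $\|r\otimes s\|_{L^2(\Omega)}^2 = \|r\|_{L^2(\Omega_x)}^2 \|s\|_{L^2(\Omega_y)}^2$ yields $r \in L^2(\Omega_x)$ and $s \in L^2(\Omega_y)$, neither of them vanishing almost everywhere. By density of $C^\infty_c(\Omega_y)$ in $L^2(\Omega_y)$, I can select $\psi \in C^\infty_c(\Omega_y)$ with $c_\psi := \int_{\Omega_y} s\psi \neq 0$. Choose $u_k \in C^\infty_c(\Omega)$ with $u_k \to r\otimes s$ in $H^1(\Omega)$, and define
$$r_k(x) := c_\psi^{-1} \int_{\Omega_y} u_k(x,y)\,\psi(y)\,dy.$$
Since $u_k$ has compact support in $\Omega$, the function $r_k$ has compact support in $\Omega_x$ and belongs to $C^\infty_c(\Omega_x)$. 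Cauchy--Schwarz in $y$ combined with Fubini provides the control $\|r_k - r_l\|_{H^1(\Omega_x)} \le |c_\psi|^{-1}\|\psi\|_{L^2(\Omega_y)} \|u_k - u_l\|_{H^1(\Omega)}$, so $(r_k)$ is Cauchy in $H^1(\Omega_x)$; its $L^2$-limit is $c_\psi^{-1} \int_{\Omega_y} (r\otimes s)(\cdot,y)\psi(y)\,dy = r$. Thus $r$ belongs to the $H^1$-closure of $C^\infty_c(\Omega_x)$, i.e.\ $r \in H^1_0(\Omega_x)$, and a symmetric argument gives $s \in H^1_0(\Omega_y)$.

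The main step is the construction of the one-dimensional approximants $r_k$ of $r$ by ``integrating out'' the $y$ variable against a well-chosen test function $\psi$. This is precisely where the nondegeneracy hypothesis $r\otimes s \not\equiv 0$ intervenes in an essential way, through the existence of a $\psi$ with $c_\psi \neq 0$; without it one loses all control over the factor $r$ separately. Once this choice is made, the remaining verifications reduce to routine Fubini and Cauchy--Schwarz estimates.
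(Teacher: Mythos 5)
Your proof is correct, but it is genuinely more careful than the paper's and takes a different route. The paper's proof simply writes out the $H^1$-seminorm of a tensor product,
\[
\int_\Omega |\nabla(r\otimes s)|^2 = \int_{\Omega_x}|r'|^2\int_{\Omega_y}|s|^2 + \int_{\Omega_x}|r|^2\int_{\Omega_y}|s'|^2,
\]
and argues that finiteness of these products (with $r\neq 0$, $s\neq 0$) forces each factor to be finite. That argument cleanly gives $r\in H^1(\Omega_x)$ and $s\in H^1(\Omega_y)$, but the paper then directly asserts membership in $H^1_0$ without addressing the boundary condition; implicitly this relies on the 1D fact that $H^1$ functions on an interval have boundary traces, together with a trace argument for $r\otimes s$. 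Your proof, by contrast, handles the boundary condition head-on: in the ``$\Leftarrow$'' direction you build $C^\infty_c(\Omega)$ approximants by tensoring 1D approximants and controlling them via $\|u\otimes v\|_{H^1}\le\|u\|_{H^1}\|v\|_{H^1}$, and in the ``$\Rightarrow$'' direction you extract $C^\infty_c(\Omega_x)$ approximants of $r$ by averaging the 2D approximants of $r\otimes s$ against a suitable $\psi\in C^\infty_c(\Omega_y)$ with $\int s\psi\neq 0$. This slicing construction is the essential extra idea: it makes the $H^1_0$ conclusion rigorous and, as a bonus, works verbatim in any dimension, whereas the paper's argument leans on the one-dimensional setting. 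The nondegeneracy hypothesis $r\otimes s\neq 0$ enters in both proofs at the same point (to divide out a nonvanishing factor), so the logic is parallel there; the added value of your version is precisely the careful treatment of the zero boundary trace.
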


\begin{lem}\label{lem:distrib}
Let $T \in {\mathcal D}'(\Omega)$ be a distribution such that, for any
functions $(\phi,\psi) \in {\mathcal C}^\infty_c(\Omega_x) \times {\mathcal C}^\infty_c(\Omega_y)$,
$$(T, \phi \otimes \psi)_{({\mathcal D}'(\Omega),{\mathcal
    D}(\Omega))}=0$$
then $T=0$ in ${\mathcal D}'(\Omega)$. Moreover, for any two sequences of distributions $R_n \in {\mathcal D}'(\Omega_x)$ and $S_n\in {\mathcal D}'(\Omega_y)$   such that $\lim_{n \to \infty} R_n = R$ in ${\mathcal D}'(\Omega_x)$ and $\lim_{n \to \infty} S_n=S$ in ${\mathcal D}'(\Omega_y)$, $\lim_{n \to \infty} R_n \otimes S_n = R 
\otimes S$ in ${\mathcal D}'(\Omega)$.
\end{lem}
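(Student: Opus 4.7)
The plan is to handle the two assertions of the lemma separately, both reducing to standard facts about the topology of $\mathcal{D}(\Omega)$.

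For the first assertion, the strategy is to show that the vector space spanned by products $\phi \otimes \psi$, with $\phi \in \mathcal{C}^\infty_c(\Omega_x)$ and $\psi \in \mathcal{C}^\infty_c(\Omega_y)$, is sequentially dense in $\mathcal{D}(\Omega)$. Once this density is established, the conclusion $T = 0$ follows immediately by linearity and the sequential continuity of $T$ on $\mathcal{D}(\Omega)$. To obtain the density, I would fix an arbitrary $\chi \in \mathcal{D}(\Omega)$ with compact support $K$, then choose compacts $K_x \Subset \Omega_x$ and $K_y \Subset \Omega_y$ and a tensor product cut-off $\theta_x(x)\theta_y(y)$ equal to $1$ on a neighborhood of $K$ and supported in $K_x \times K_y$. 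On $K_x \times K_y$, approximate $\chi$ uniformly together with all its derivatives by polynomials in $(x,y)$ (applying Stone--Weierstrass to a sufficiently high derivative and integrating back, or equivalently using convolution with a tensor-product mollifier). Polynomials are finite sums of simple tensors, and multiplication by $\theta_x \otimes \theta_y$ preserves this structure while keeping supports in a fixed compact. This yields the required sequence converging to $\chi$ in $\mathcal{D}(\Omega)$.

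For the second assertion, the key ingredient is the Banach--Steinhaus theorem for distributions: a convergent sequence $R_n \to R$ in $\mathcal{D}'(\Omega_x)$ is equicontinuous, i.e., on each compact $K_x \subset \Omega_x$ there exist an integer $k$ and a constant $C$ with $|\langle R_n, \phi\rangle| \le C \|\phi\|_{\mathcal{C}^k(K_x)}$ for every $\phi \in \mathcal{D}(K_x)$; similarly for $\{S_n\}$. Given $\chi \in \mathcal{D}(\Omega)$ with support in $K_x \times K_y$, define
\begin{equation*}
F_n(x) := \langle S_n, \chi(x,\cdot)\rangle_{\mathcal{D}'(\Omega_y),\mathcal{D}(\Omega_y)}, \qquad F(x) := \langle S, \chi(x,\cdot)\rangle,
\end{equation*}
so that $\langle R_n \otimes S_n, \chi\rangle = \langle R_n, F_n\rangle$ and likewise for the limit. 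Differentiating under the duality bracket, $D^\alpha F_n(x) = \langle S_n, D^\alpha_x \chi(x,\cdot)\rangle$ converges pointwise to $D^\alpha F(x)$. Using the equicontinuity of $\{S_n\}$ together with the uniform continuity of the map $x \mapsto D^\alpha_x \chi(x,\cdot) \in \mathcal{D}(K_y)$ in the relevant $\mathcal{C}^k$-seminorm, this pointwise convergence upgrades to uniform convergence on $K_x$. Hence $F_n \to F$ in $\mathcal{D}(K_x)$.

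To conclude, I combine $F_n \to F$ in $\mathcal{D}(\Omega_x)$ with $R_n \to R$ in $\mathcal{D}'(\Omega_x)$, writing
\begin{equation*}
\langle R_n, F_n\rangle - \langle R, F\rangle = \langle R_n, F_n - F\rangle + \langle R_n - R, F\rangle,
\end{equation*}
and controlling the first term using the equicontinuity of $\{R_n\}$ applied to $F_n - F \to 0$ in $\mathcal{D}(K_x)$, while the second tends to zero by hypothesis. The main obstacle is really the bookkeeping in this last step: one must apply Banach--Steinhaus on \emph{both} factors simultaneously to turn joint convergence into convergence of the tensor product, which is why the result is not merely a formal consequence of separate continuity.
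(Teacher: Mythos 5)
The paper does not prove this lemma: immediately after the statement the authors remark that it is well known in distribution theory and supply proofs only for the other two lemmas in that section. There is therefore no argument in the paper to compare yours against, so I will assess the proposal on its own merits. It is correct and complete, and it follows the standard route found in the classical references: for the first assertion you establish sequential density of the linear span of simple tensors $\phi \otimes \psi$ in $\mathcal{D}(\Omega)$ by localization and polynomial approximation; for the second you invoke Banach--Steinhaus on the LF-space $\mathcal{D}$ to get equicontinuity of both sequences, show $F_n(x) = \langle S_n, \chi(x,\cdot)\rangle \to F(x)$ in $\mathcal{D}(\Omega_x)$ by upgrading pointwise to uniform convergence on compacts, and close with the telescoping identity $\langle R_n, F_n\rangle - \langle R, F\rangle = \langle R_n, F_n - F\rangle + \langle R_n - R, F\rangle$. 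You correctly identify that separate continuity alone is not enough and that equicontinuity on \emph{both} factors is the essential point.

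One inaccuracy worth flagging: the parenthetical claim that the density step can "equivalently" be done by convolution with a $\mathcal{C}^\infty$ tensor-product mollifier is not right as stated, since convolving $\chi$ with $\rho^x_\epsilon \otimes \rho^y_\epsilon$ yields a smooth approximant but not a finite sum of simple tensors, so it does not by itself give the density of the algebraic tensor product. Convolution with a \emph{polynomial} kernel (Weierstrass, Landau) or truncation of a multi-dimensional Fourier series would work; the polynomial route you state first is the correct one, and it does require a diagonal extraction so that a single sequence controls derivatives of all orders, which you implicitly acknowledge. With that small repair the argument is sound.
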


\begin{lem}\label{lem:E_neg}
Let us consider a function $f \in L^2(\Omega)$. If $f \neq 0$, then $\exists (r,s) \in H^1_0(\Omega_x) \times H^1_0(\Omega_y)$ such that
$${\mathcal E}(r \otimes s)< 0,$$
where ${\mathcal E}$ is defined by~\eqref{eq:ener}.
\end{lem}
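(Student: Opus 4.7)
The plan is to exploit the homogeneity of the tensor product construction: once we produce a single admissible pair $(r,s)$ with $\int_\Omega f\,(r\otimes s)\neq 0$, the energy along the ray $t\mapsto \mathcal{E}(t\,r\otimes s)$ is a quadratic polynomial in $t$ of the form $\frac{t^2}{2}\|r\otimes s\|^2 - t\int_\Omega f\,(r\otimes s)$, which is strictly negative for $t$ chosen small with the correct sign. The whole statement therefore reduces to the existence of such a pair.

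To produce it, I argue by contradiction. Assume that $\int_\Omega f\,(r\otimes s)=0$ for every $(r,s)\in H^1_0(\Omega_x)\times H^1_0(\Omega_y)$. In particular, this holds for every $(\phi,\psi)\in\mathcal{C}^\infty_c(\Omega_x)\times\mathcal{C}^\infty_c(\Omega_y)$, since such $\phi,\psi$ belong to $H^1_0(\Omega_x), H^1_0(\Omega_y)$ respectively, and then $\phi\otimes\psi\in H^1_0(\Omega)$ by Lemma~\ref{lem:H10}. Viewing $f$ as a distribution on $\Omega$, this means exactly that the hypothesis of Lemma~\ref{lem:distrib} is satisfied, hence $f=0$ as a distribution and therefore as an $L^2$ function, contradicting $f\neq 0$.

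Thus there exists $(r,s)\in H^1_0(\Omega_x)\times H^1_0(\Omega_y)$ with $\int_\Omega f\,(r\otimes s)\neq 0$; flipping the sign of $r$ if necessary, we may assume $\int_\Omega f\,(r\otimes s) > 0$. Taking $t>0$ small enough (explicitly $t<2\int_\Omega f\,(r\otimes s)/\|r\otimes s\|^2$ suffices, using that $r\otimes s\in H^1_0(\Omega)$ again by Lemma~\ref{lem:H10} so $\|r\otimes s\|$ is finite and nonzero) one gets $\mathcal{E}(t\,r\otimes s)<0$, and $(t r, s)\in H^1_0(\Omega_x)\times H^1_0(\Omega_y)$ is the desired pair.

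No step is really delicate; the only point deserving attention is checking admissibility of the smooth compactly supported test products in the duality bracket against $f$, and making sure the reduction to Lemma~\ref{lem:distrib} is legitimate — which is precisely why that lemma was stated just above. The rest is an elementary one-dimensional minimization of a quadratic.
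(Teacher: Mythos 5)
Your argument is correct and is essentially the paper's proof: both exploit the quadratic scaling $\mathcal{E}(t\,r\otimes s)=\frac{t^2}{2}\|r\otimes s\|^2 - t\int_\Omega f\,(r\otimes s)$ and reduce the matter to showing $f$ cannot annihilate all tensor products, which is exactly Lemma~\ref{lem:distrib}. The only difference is cosmetic (you run the contradiction on the linear term directly rather than on the sign of $\mathcal{E}$), so nothing further to add.
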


Lemma~\ref{lem:distrib} is well-known in distribution theory. We now provide
for consistency a short proof of Lemmas~\ref{lem:H10} and
\ref{lem:E_neg}, respectively.
%%%%%%%%%%%%%%%%%%%%%%%%%%%%%%%%%%%%%%%%%%%%%%%%
% cf. Laurent Schwartz Th=E9orie des distributions, Chap 4, th=E9or=E8mes III et VI.
% cf. aussi distrib.tex
%%%%%%%%%%%%%%%%%%%%%%%%%%%%%%%%%%%%%%%%%%%%%%%%

\medskip

\noindent{\bf Proof of Lemma~\ref{lem:H10}}
Notice that
$$\int_\Omega |\nabla (r \otimes s) |^2=\int_{\Omega_x} |r '|^2 \int_{\Omega_y} |s|^2 + \int_{\Omega_x}
|r|^2 \int_{\Omega_y} |s'|^2$$
where $'$ denotes henceforth the differentiation with respect to a
one-dimensional argument. Thus, it is clear that if $r \in
H^1_0(\Omega_x)$ and $s \in H^1_0(\Omega_y)$, then $r \otimes s \in
H^1_0(\Omega)$. Now, when $r \otimes s \in
H^1_0(\Omega)$, we have  $\int_{\Omega_x} |r '|^2 \int_{\Omega_y}
|s|^2 < \infty$ and $\int_{\Omega_x}
|r|^2 \int_{\Omega_y} |s'|^2 < \infty$. This implies  $r \in
H^1_0(\Omega_x)$ and $s \in H^1_0(\Omega_y)$, since $r \neq 0$ and $s
\neq 0$.
\hfill$\diamondsuit$

\medskip

\noindent{\bf Proof of Lemma~\ref{lem:E_neg}}
Fix $f \in L^2(\Omega)$ and assume that for all $(r,s) \in
H^1_0(\Omega_x) \times H^1_0(\Omega_y)$, ${\mathcal E}(r \otimes s)\ge
0$. Then, for a fixed $(r,s) \in H^1_0(\Omega_x) \times
H^1_0(\Omega_y)$, we have, for all $\epsilon \in \R$,
$$\frac{\epsilon^2}{2} \int | \nabla (r \otimes s) |^2 \ge \epsilon \int 
f r \otimes
s.$$
By letting $\epsilon \to 0$, this shows that $f \in \{r \otimes s, \, (r,s)\in
L^2(\Omega_x)\times L^2(\Omega_y)\}^\perp$ which implies $f=0$ (by Lemma~\ref{lem:distrib}) and concludes the proof.
\hfill$\diamondsuit$

\medskip

The above lemmas allow us to prove.
\begin{prop}\label{prop:LRA_var_well_posed}
For each $n$, there exists a solution to problems~\eqref{eq:LRA_var} and~\eqref{eq:LRA_varo}.
\end{prop}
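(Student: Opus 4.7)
The two minimization problems \eqref{eq:LRA_var} and \eqref{eq:LRA_varo} have identical structure (the only difference being that $f_{n-1}$ is replaced by $f_{n-1}^o$, both of which lie in $H^{-1}(\Omega)$ by inductive construction of the iterates), so it is enough to treat \eqref{eq:LRA_var}. I would apply the direct method of the calculus of variations to
$$
J(r,s) := \tfrac12 \|r\otimes s\|^2 - \int_\Omega f_{n-1}\, r\otimes s.
$$
The main obstacle is the lack of joint coercivity: the product $(r,s)\mapsto r\otimes s$ is invariant under $(r,s)\mapsto(\lambda r, \lambda^{-1} s)$, so nothing prevents a minimizing sequence from degenerating to $r_k \to \infty$ while $s_k \to 0$ (or vice versa) even though $r_k\otimes s_k$ stays controlled.

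Step~1 (bounding the infimum and the sequence). The estimate $\left|\int_\Omega f_{n-1}\, r\otimes s\right| \leq \|f_{n-1}\|_{H^{-1}(\Omega)} \|r\otimes s\|$ combined with minimization of a one-variable quadratic in $\|r\otimes s\|$ yields $J \geq -\tfrac12 \|f_{n-1}\|_{H^{-1}(\Omega)}^2$ and, along a minimizing sequence $(r_k,s_k)$, a uniform bound $\|r_k\otimes s_k\| \leq C$. If $f_{n-1}=0$, then $r_n=s_n=0$ is trivially a minimizer; otherwise Lemma~\ref{lem:E_neg} guarantees that $m := \inf J < 0$.

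Step~2 (normalization and no-collapse). Using the scale invariance I renormalize so that $\|r_k\|_{L^2(\Omega_x)}=1$ (which is legitimate since $r_k \neq 0$ along a minimizing sequence, as $m<0$). The identity $\|r\otimes s\|^2 = \int|r'|^2 \int|s|^2 + \int|r|^2 \int|s'|^2$ from the proof of Lemma~\ref{lem:H10} then forces $\|s_k'\|_{L^2(\Omega_y)} \leq C$, hence $(s_k)$ is bounded in $H^1_0(\Omega_y)$ by Poincar\'e. To also bound $(r_k)$ in $H^1_0(\Omega_x)$ I must rule out $\|s_k\|_{L^2(\Omega_y)} \to 0$: if this happened along a subsequence, then $r_k\otimes s_k \to 0$ in $L^2(\Omega)$ and, being bounded in $H^1_0(\Omega)$ with unique possible weak limit, also weakly in $H^1_0(\Omega)$; this would give $\liminf J(r_k,s_k) \geq 0$, contradicting $m<0$. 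Hence $\|s_k\|_{L^2(\Omega_y)}\geq c>0$, and the same identity yields $\|r_k'\|_{L^2(\Omega_x)}\leq C/c$.

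Step~3 (passage to the limit). Extract $r_k\rightharpoonup r_n$ in $H^1_0(\Omega_x)$ and $s_k\rightharpoonup s_n$ in $H^1_0(\Omega_y)$; by Rellich these convergences are strong in $L^2$, whence $r_k\otimes s_k\to r_n\otimes s_n$ strongly in $L^2(\Omega)$ by the triangle inequality. Since $(r_k\otimes s_k)$ is bounded in $H^1_0(\Omega)$ and $r_n\otimes s_n \in H^1_0(\Omega)$ by Lemma~\ref{lem:H10}, uniqueness of limits upgrades this to weak convergence in $H^1_0(\Omega)$. Weak lower semicontinuity of $\|\cdot\|$ and weak-strong convergence in the linear term then give $J(r_n,s_n)\le\liminf J(r_k,s_k)=m$, so $(r_n,s_n)$ is a minimizer. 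I expect Step~2 to be the delicate point of the whole argument, because it is precisely there that the strict negativity of $m$ must be used to prevent the scale-invariance from producing a degenerate minimizing sequence; all other steps are standard once coercivity has been recovered.
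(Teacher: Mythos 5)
Your proof is correct, and it takes a genuinely different route from the paper's at the decisive step. Both proofs normalize $\|r^k\|_{L^2(\Omega_x)}=1$, note that the minimizing sequence is bounded only as a product in $H^1_0(\Omega)$, and use the strict negativity $m<0$ (via Lemma~\ref{lem:E_neg}) to prevent degeneration. The difference is where that strict negativity is invoked. The paper makes no attempt to bound $r^k$ in $H^1_0(\Omega_x)$: it only has $r^k$ bounded in $L^2(\Omega_x)$ and $s^k$ bounded in $H^1_0(\Omega_y)$, extracts a weak $H^1_0(\Omega)$ limit $w$ of $r^k\otimes s^k$, identifies $w=r\otimes s$ via the distributional tensor-product convergence of Lemma~\ref{lem:distrib}, then uses $\mathcal{E}(w)\le m<0$ to show $w\ne 0$ and finally appeals to Lemma~\ref{lem:H10} \emph{a posteriori} to upgrade $r$ from $L^2$ to $H^1_0$. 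You instead use $m<0$ \emph{a priori} to rule out $\|s^k\|_{L^2(\Omega_y)}\to 0$, which restores coercivity and bounds $r^k$ in $H^1_0(\Omega_x)$ directly; the passage to the limit is then the textbook Rellich plus weak lower semicontinuity argument and you do not need the second half of Lemma~\ref{lem:distrib} at all. Your version is somewhat more self-contained and perhaps easier to generalize to dictionaries where a distributional tensor-product lemma is not available; the paper's is shorter because it defers the regularity of $r$ to the very end. (Minor typo: the bound in your Step~2 should read $\|r_k'\|_{L^2(\Omega_x)}\le C/c^2$ or $\sqrt{C}/c$ depending on whether $C$ bounds $\|r_k\otimes s_k\|$ or its square — immaterial for the argument.)
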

\begin{proof}
Without loss of generality, we may only argue on problem~\eqref{eq:LRA_var} and assume that $n=1$ and $f_0=f \neq 0$.
First, using~\eqref{eq:ener_polar}, it is clear that
\begin{align*}
\frac{1}{2}\int_\Omega |\nabla (r \otimes s)|^2 - \int_\Omega f  \,r \otimes s
&= \frac{1}{2}\int_\Omega |\nabla (r
\otimes s - g)|^2 -
\frac{1}{2}\int_\Omega |\nabla g|^2\\
&\ge - \frac{1}{2}\int_\Omega |\nabla g|^2.
\end{align*}
Thus, we can introduce $m=\inf_{(r,s) \in H^1_0(\Omega_x) \times H^1_0(\Omega_y)} \left(
  \frac{1}{2}\int_\Omega |\nabla (r \otimes s)|^2 - \int_\Omega f  \,r
  \otimes s \right)$ and a minimizing sequence $(r^k,s^k)$ such that
$\lim_{k \to \infty} {\mathcal E}(r^k \otimes s^k) = m$. Notice that we
may suppose, again  without loss of generality (up to a multiplication of $s^k$
by a constant), that
$$\int_\Omega |r^k|^2 = 1.$$

Since ${\mathcal E}(u) \ge \frac{1}{4} \int_\Omega |\nabla u|^2 -
 \int_\Omega |\nabla g|^2$,
the sequence $(r^k \otimes s^k)$ is bounded in $H^1_0(\Omega)$: there
exists some $C >0$ such that, for all $k \ge 1$,
\begin{equation}\label{eq:subseq_bounded}
\int_{\Omega_x} |(r^k) '|^2 \int_{\Omega_y} |s^k|^2 + \int_{\Omega_x}
|r^k|^2 \int_{\Omega_y} |(s^k)'|^2\le C.
\end{equation}
From this we deduce the existence of  $w \in H^1_0(\Omega)$, $r \in
L^2(\Omega_x)$ and $s \in H^1_0(\Omega_y)$ such that (up to the
extraction of a subsequence):
\begin{itemize}
\item $r^k \otimes s^k$ converges to $w$ weakly in $H^1_0(\Omega)$, and
  strongly in $L^2(\Omega)$,
\item $r^k$ converges to $r$ weakly in $L^2(\Omega_x)$,
\item $s^k$ converges to $s$ weakly in $H^1_0(\Omega_y)$, and
  strongly in $L^2(\Omega_y)$.
\end{itemize}
Since $r^k \otimes s^k$ converges to $w$ weakly in $H^1_0(\Omega)$ and
${\mathcal E}$ is convex and continuous, we have ${\mathcal E}(w) \le
\liminf_{k \to \infty} {\mathcal E} (r^k \otimes s^k)$. This yields
${\mathcal E}(w) \le m$. Moreover, by Lemma~\ref{lem:E_neg}, we know $m<0$. Therefore,
\begin{equation}\label{eq:w_neg}
{\mathcal E}(w) < 0.
\end{equation}

The convergences $r^k \to r$ and $s^k \to s$ in the distributional sense
imply the convergence $r^k \otimes s^k \to r \otimes s$ in the distributional
sense (see Lemma~\ref{lem:distrib}), and therefore $w = r \otimes s$. Thus, if $w
\neq 0$, Lemma~\ref{lem:H10} concludes the proof,  showing that indeed
$r \in H^1_0(\Omega_x)$. Now,  we cannot have $w=0$, since this would imply
${\mathcal E}(w) = 0$, which would contradict~\eqref{eq:w_neg}. This
concludes the proof.
\end{proof}

%%%%%%%%%%%%%%%%%%%%%%%%%%%%%%%%%%%%%%%%%%%%%%%%
% ATTENTION : la convergence des $r_k \otimes s_k$ n'est pas forte dans
% $H^1_0(\Omega)$, alors que ce serait le cas si on minimisait sur
% $H^1_0$ en utilisant une formule de polarisation. Le probl=E8me est
% qu'ici,  l'ensemble sur lequel on minimise n'est pas convexe...
%%%%%%%%%%%%%%%%%%%%%%%%%%%%%%%%%%%%%%%%%%%%%%%%

The optimization step~\eqref{eq:LRA_galo} admits also a solution by standard arguments and we therefore have proven:
\begin{lem}\label{lem:algo_well_defined}
At each iteration of the Pure Greedy Algorithm, problem
(\ref{eq:LRA_var}) admits (at least) a minimizer $(r_n,s_n)$. Likewise,
at each iteration of the Orthogonal Greedy Algorithm, problem
(\ref{eq:LRA_varo}) admits (at least) a minimizer $(r_n^o,s_n^o)$.
\end{lem}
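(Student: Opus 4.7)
The plan is to treat the two algorithms in parallel and to reduce as much as possible to Proposition~\ref{prop:LRA_var_well_posed} and to elementary arguments on finite-dimensional quadratic minimization.

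For the Pure Greedy Algorithm, the minimization problem~\eqref{eq:LRA_var} at iteration $n$ has exactly the same structure as the problem treated in Proposition~\ref{prop:LRA_var_well_posed}, with $f$ replaced by $f_{n-1}$. The only discrepancy is that $f_{n-1}$ belongs a priori to $H^{-1}(\Omega)$ rather than to $L^2(\Omega)$. However, every step of the proof of Proposition~\ref{prop:LRA_var_well_posed} goes through once $\int_\Omega f_{n-1}\, r\otimes s$ is reinterpreted as the duality pairing $\langle f_{n-1}, r\otimes s \rangle_{H^{-1},H^1_0}$: the polar identity~\eqref{eq:ener_polar} holds with $g$ replaced by $g_{n-1}$ defined by~\eqref{eq:g_n}, yielding the coercivity estimate and the boundedness~\eqref{eq:subseq_bounded} of the minimizing sequence; the convexity and weak lower semi-continuity of the energy give ${\mathcal E}(w)\le m$; the analogue of Lemma~\ref{lem:E_neg} ensuring $m<0$ follows from the same Taylor expansion argument combined with Lemma~\ref{lem:distrib}, since $f_{n-1}\neq 0$ (otherwise the algorithm would have stopped); and the distributional identification $w=r\otimes s$ is again a direct consequence of Lemma~\ref{lem:distrib}. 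The same reasoning yields existence of $(r_n^o,s_n^o)$ for step~1 of the Orthogonal Greedy Algorithm.

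It remains to handle the Galerkin step~\eqref{eq:LRA_galo}. The functional to minimize is the quadratic form
\[
J(\beta) = \tfrac{1}{2}\,\beta^T A \beta - b\cdot\beta,\qquad \beta=(\beta_1,\dots,\beta_n)\in\R^n,
\]
where $A_{ij}=\langle r_i^o\otimes s_i^o, r_j^o\otimes s_j^o\rangle$ is the Gram matrix and $b_k=\int_\Omega f\, r_k^o\otimes s_k^o$. The matrix $A$ is symmetric positive semi-definite, so $J$ is convex, continuous and bounded below. Existence of a minimizer is equivalent to solvability of the linear system $A\beta=b$, i.e.\ to $b\in\mathrm{Im}(A)=\mathrm{Ker}(A)^\perp$. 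This compatibility is automatic: if $\gamma\in\mathrm{Ker}(A)$ then $\sum_k\gamma_k r_k^o\otimes s_k^o=0$ in $H^1_0(\Omega)$, so $\gamma\cdot b=\int_\Omega f\,\sum_k\gamma_k r_k^o\otimes s_k^o=0$. A minimizer $(\alpha_1,\dots,\alpha_n)$ therefore exists.

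The main potential obstacle is the subtle point that the right-hand sides $f_{n-1}$ and $f_{n-1}^o$ appearing at iteration $n\ge 2$ belong only to $H^{-1}(\Omega)$, whereas Proposition~\ref{prop:LRA_var_well_posed} and Lemma~\ref{lem:E_neg} are stated for $L^2$ data; however, as sketched above, this requires no more than a systematic reinterpretation of integrals as duality pairings, and introduces no new mathematical difficulty.
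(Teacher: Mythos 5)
Your proposal is correct and follows the same route as the paper: reduce to Proposition~\ref{prop:LRA_var_well_posed} for the first optimization step, and observe that the Galerkin step~\eqref{eq:LRA_galo} is a standard finite-dimensional quadratic minimization. Your treatment of the Galerkin step (Gram matrix $A$ positive semi-definite, and compatibility $b\in\mathrm{Ker}(A)^\perp$ because $\sum_k\gamma_k\,r_k^o\otimes s_k^o=0$ forces $\gamma\cdot b=0$) is exactly what the paper summarizes as ``standard arguments.'' You are also right to flag the one genuinely subtle point that the paper's terse ``without loss of generality assume $n=1$ and $f_0=f$'' glosses over: at iterations $n\ge 2$ the data $f_{n-1}$ is a priori only in $H^{-1}(\Omega)$, whereas Lemma~\ref{lem:E_neg} is stated and proved for $L^2$ data. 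Your fix --- reinterpret $\int_\Omega f_{n-1}\,r\otimes s$ as the $H^{-1}$--$H^1_0$ duality pairing, use the polar identity~\eqref{eq:ener_polar} with $g_{n-1}$ defined by~\eqref{eq:g_n}, and rerun the Taylor-expansion argument of Lemma~\ref{lem:E_neg} followed by Lemma~\ref{lem:distrib} --- is correct and is the cleanest way to make the reduction to $n=1$ rigorous (the alternative, proving inductively that $f_n\in L^2$ via the elliptic regularity implicit in the Euler--Lagrange system~\eqref{eq:LRA_EL}, is also possible but requires machinery the paper only develops afterwards).
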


  It is important to note that, in either case,  uniqueness of the
  iterate is unclear. Throughout the text, we will thus be refering to
  {\it the} functions $(r_n,s_n)$ (resp. $(r_n^o,s_n^o)$) although we
  do not know whether they are unique. However, our arguments and
  results are valid for \emph{any  such} functions. 

\subsection{Euler-Lagrange equations}
\label{sec:EL}

Our purpose is now to derive the  Euler-Lagrange equations of the problems
considered,  along with other important properties of the sequences
$(r_n,s_n)$ and $(r_n^o,s_n^o)$. We only state the results for
$(r_n,s_n)$. Similar  properties hold for  $(r_n^o,s_n^o)$, replacing $f_n$ and $g_n$ by $f_n^o$ and $g_n^o$.

The first order  optimality conditions write:
\begin{prop}
The functions $(r_n,s_n) \in H^1_0(\Omega_x)\times H^1_0(\Omega_y)$
satisfying~\eqref{eq:LRA_var} are such that: for any
functions $(r,s) \in H^1_0(\Omega_x)\times H^1_0(\Omega_y)$
\begin{equation}\label{eq:LRA_EL_FV}
\int_{\Omega} \nabla (r_n \otimes s_n) \cdot \nabla (r_n \otimes s + r \otimes
s_n) =  \int_{\Omega} f_{n-1} (r_n \otimes s + r \otimes s_n).
\end{equation}
This  can be written equivalently as
\begin{equation}\label{eq:LRA_EL}
\left\{
\begin{array}{l}
\displaystyle - \left(\int_{\Omega_y} |s_n|^2\right)\,  r_n''  + \left(\int_{\Omega_y}
|s_n'|^2\right)\, =  r_n  \int_{\Omega_y} f_{n-1}\, s_n,\\
\\
\displaystyle - \left(\int_{\Omega_x} |r_n|^2 \right)\,  s_n''  + \left(\int_{\Omega_x}
|r_n'|^2\right)\,  = s_n  \int_{\Omega_x} f_{n-1}\, r_n,
\end{array}
\right.
\end{equation}
or, in terms of $g_n$, as: % $\forall (r,s) \in H^1_0(\Omega_x)\times H^10(\Omega_y)$,
\begin{equation}\label{eq:ortho}
\langle g_{n} , (r \otimes s_n + r_n \otimes s) \rangle = 0.
\end{equation}
\end{prop}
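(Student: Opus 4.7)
The plan is to derive the three stated forms of the first-order optimality conditions in turn, starting from the weak form (\ref{eq:LRA_EL_FV}), then specializing to get the strong form (\ref{eq:LRA_EL}), and finally rewriting in terms of $g_n$ to obtain (\ref{eq:ortho}).

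For the weak form, I would test minimality against the two natural one-parameter perturbations compatible with the tensor structure. Fixing an arbitrary $r \in H^1_0(\Omega_x)$ and setting $\phi(t) = \mathcal{E}((r_n + t r) \otimes s_n)$, the fact that $(r_n,s_n)$ is a minimizer forces $\phi'(0) = 0$, which expands to
\[
\int_\Omega \nabla(r_n \otimes s_n) \cdot \nabla(r \otimes s_n) = \int_\Omega f_{n-1}\, r \otimes s_n.
\]
The symmetric choice $\psi(t) = \mathcal{E}(r_n \otimes (s_n + t s))$ with $s \in H^1_0(\Omega_y)$ yields the analogous identity with $r_n \otimes s$ on the right, and summing the two gives (\ref{eq:LRA_EL_FV}). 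Note that both $r \otimes s_n$ and $r_n \otimes s$ lie in $H^1_0(\Omega)$ by Lemma~\ref{lem:H10}, so the integrals make sense.

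To pass to the strong form (\ref{eq:LRA_EL}), I would apply (\ref{eq:LRA_EL_FV}) separately with $s = 0$ and with $r = 0$. Using the tensor-product identity for the $H^1$ inner product already recorded in the proof of Lemma~\ref{lem:H10}, the first choice gives, for every $r \in H^1_0(\Omega_x)$,
\[
\Bigl(\int_{\Omega_y} |s_n|^2\Bigr) \int_{\Omega_x} r_n' r' + \Bigl(\int_{\Omega_y} |s_n'|^2\Bigr)\int_{\Omega_x} r_n r = \int_{\Omega_x} r \Bigl(\int_{\Omega_y} f_{n-1}\, s_n\Bigr).
\]
This is the weak form of a one-dimensional elliptic equation in $r_n$ with constant coefficients $\int_{\Omega_y}|s_n|^2 > 0$ and $\int_{\Omega_y}|s_n'|^2$, and a right-hand side in $L^2(\Omega_x)$ since $s_n \in L^2(\Omega_y)$ and $f_{n-1} \in H^{-1}(\Omega)$ pairs against $s_n$ fiberwise (by Fubini for the $L^2$ part of $f_{n-1}$; more generally one reads this as a duality pairing in $y$). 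An integration by parts in $x$ yields the first line of (\ref{eq:LRA_EL}), and the choice $r = 0$ gives the second by the symmetric argument.

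Finally, (\ref{eq:ortho}) is essentially a rewriting. By definition of $g_{n-1}$ we have $\langle g_{n-1}, v\rangle = \int_\Omega f_{n-1}\, v$ for every $v \in H^1_0(\Omega)$, so (\ref{eq:LRA_EL_FV}) applied with $v = r \otimes s_n + r_n \otimes s$ reads $\langle r_n \otimes s_n, v\rangle = \langle g_{n-1}, v\rangle$, i.e.\ $\langle g_{n-1} - r_n \otimes s_n, v\rangle = 0$; by (\ref{eq:g_np1}), $g_{n-1} - r_n \otimes s_n = g_n$, which is (\ref{eq:ortho}). I do not expect a real obstacle here: the only point requiring a moment of care is the justification that $s_n \not\equiv 0$ and $r_n \not\equiv 0$ (so that the coefficients of the leading terms in (\ref{eq:LRA_EL}) are positive), but this follows from $\mathcal{E}(r_n \otimes s_n) < 0$, itself a consequence of Lemma~\ref{lem:E_neg} together with minimality, exactly as already used in the proof of Proposition~\ref{prop:LRA_var_well_posed}.
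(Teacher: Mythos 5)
Your proof is correct and reaches all three forms, but via a slightly different variational perturbation than the paper. The paper perturbs both factors simultaneously as $(r_n + \epsilon r) \otimes (s_n + \epsilon s)$, expands the energy as a polynomial in $\epsilon$ (keeping track of an $O(\epsilon^3)$ remainder), and reads off the vanishing of the first-order coefficient $I_1$; that same expansion is then recycled in Lemma~\ref{lem:EL2} to extract the second-order optimality conditions from $I_2 \ge 0$. You instead use the two one-parameter perturbations $(r_n + tr) \otimes s_n$ and $r_n \otimes (s_n + ts)$, each of which makes the restricted energy exactly quadratic in $t$, extract the two stationarity identities separately, and sum. The routes are equivalent: taking $s = 0$ or $r = 0$ in~\eqref{eq:LRA_EL_FV} recovers your two identities, and summing them gives~\eqref{eq:LRA_EL_FV}. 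Yours is arguably cleaner when one wants only the first-order conditions, since there is no higher-order remainder to manage; the paper's approach pays that small cost once and gets the second-order conditions as a byproduct of the same computation. Your passage to the strong form via Fubini and an integration by parts in $x$ (respectively $y$), and your derivation of~\eqref{eq:ortho} from $\langle g_{n-1}, v\rangle = \int_\Omega f_{n-1}\, v$ together with $g_n = g_{n-1} - r_n \otimes s_n$, match the paper's reasoning up to presentation. One minor notational slip: you write $\mathcal{E}$ for the functional being minimized at step $n$, but the paper's $\mathcal{E}$ in~\eqref{eq:ener} uses the original $f$ rather than $f_{n-1}$ as in~\eqref{eq:LRA_var}; this is cosmetic and does not affect the substance of your argument.
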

\begin{proof}
Equation~\eqref{eq:LRA_EL_FV} is obtained  differentiating~\eqref{eq:LRA_var}. Namely, for any $(r,s) \in H^1_0(\Omega_x)\times H^1_0(\Omega_y)$ and any $\epsilon \in \R$, we have
\begin{eqnarray}
\label{eq:eps}
&&\frac{1}{2}\int_\Omega |\nabla \left( (r_n + \epsilon r) \otimes (s_n +
\epsilon s) \right)|^2 - \int_\Omega f_{n-1}  \, (r_n + \epsilon r)
\otimes (s_n + \epsilon s)\nonumber\\
&&\quad \quad \quad \quad \ge\frac{1}{2}\int_\Omega |\nabla (r_n
\otimes s_n) |^2 - \int_\Omega f_{n-1}  \, r_n \otimes s_n.
\end{eqnarray}
It holds:
\begin{align*}
\frac{1}{2}&\int_\Omega |\nabla \left(  (r_n + \epsilon r) \otimes (s_n +
\epsilon s) \right)|^2 - \int_\Omega f_{n-1}  \,(r_n + \epsilon r)
\otimes (s_n + \epsilon s)\\
&=\frac{1}{2}\int_\Omega |\nabla (r_n \otimes s_n) + \epsilon \nabla (r
\otimes s_n + r_n \otimes s) + \epsilon^2 \nabla (r \otimes s)|^2 - \int_\Omega f_{n-1}  \,(r_n + \epsilon r)
\otimes (s_n + \epsilon s)\\
&=\frac{1}{2}\int_\Omega |\nabla (r_n
\otimes s_n)|^2 - \int_\Omega f_{n-1}  \, r_n \otimes s_n \\
&\quad + \epsilon \left( \int_\Omega \nabla( r_n \otimes s_n )\cdot \nabla (r
  \otimes s_n + r_n \otimes s) - \int_\Omega f_{n-1} (r_n \otimes s + r
  \otimes s_n )\, \right)\\
& \quad + \epsilon^2 \left(\frac{1}{2}\int_\Omega |\nabla (r
  \otimes s_n + r_n \otimes s)|^2 + \int_\Omega \nabla (r_n \otimes s_n) 
\cdot \nabla (r \otimes s) - \int_\Omega f_{n-1} r \otimes s \right)
+ O(\epsilon^3)\\
&= \frac{1}{2}\int_\Omega |\nabla (r_n
\otimes s_n)|^2 - \int_\Omega f_{n-1}  \, r_n \otimes s_n + \epsilon I_1 
+ \epsilon^2 I_2 +  O(\epsilon^3).
\end{align*}
Using~\eqref{eq:eps}, we get, for any $\epsilon \in \R$,
\begin{equation}\label{eq:I1I2}
\epsilon I_1 + \epsilon^2 I_2 +  O(\epsilon^3) \ge 0,
\end{equation}
which implies that $I_1$ is zero, that is, \eqref{eq:LRA_EL_FV}.

Equation~\eqref{eq:LRA_EL} is the strong formulation
of~\eqref{eq:LRA_EL_FV}. On the other hand, \eqref{eq:ortho} is an
immediate  consequence of the following simple computations:
\begin{align*}
\langle g_{n} , (r \otimes s_n + r_n \otimes s) \rangle
&= \langle g_{n-1} - r_n \otimes s_n , (r \otimes s_n + r_n \otimes s)
\rangle\\
&= \int_\Omega \nabla(g_{n-1} - r_n \otimes s_n) \cdot \nabla(r \otimes s_n +
r_n \otimes s)\\
&= - \int_\Omega \Delta g_{n-1} (r \otimes s_n +
r_n \otimes s) -\int_\Omega \nabla (r_n \otimes s_n) \cdot  \nabla(r \otimes s_n +
r_n \otimes s)\\
&=0,
\end{align*}
since $-\Delta g_{n-1} = f_{n-1}$ in $\Omega$ and $g_{n-1}=0$ on $\partial \Omega$.
\end{proof}

\medskip

Note that, taking $r=r_n$ and $s=0$ in  the Euler-Lagrange
equations~\eqref{eq:ortho} yields
\begin{equation}\label{eq:ortho2}
\langle r_n \otimes s_n , g_{n-1} \rangle = \|r_n \otimes s_n\|^2,
\end{equation}
since $g_n= g_{n-1} - r_n \otimes s_n$. This will be useful below.

\medskip

Let us now state two other properties of $(r_n,s_n)$. The second order optimality conditions write:
\begin{lem}\label{lem:EL2}
The functions $(r_n,s_n) \in H^1_0(\Omega_x)\times H^1_0(\Omega_y)$
satisfying~\eqref{eq:LRA_var} are such that: for any
functions $(r,s) \in H^1_0(\Omega_x)\times H^1_0(\Omega_y)$
\begin{equation}\label{eq:LRA_EL2_FV}
\frac{1}{2}\int_\Omega |\nabla (r
  \otimes s_n + r_n \otimes s)|^2 + \int_\Omega \nabla (r_n
  \otimes s_n) \cdot \nabla (r \otimes s) - \int_\Omega f_{n-1} r \otimes s \ge 0,
\end{equation}
which is equivalent to: for any
functions $(r,s) \in H^1_0(\Omega_x)\times H^1_0(\Omega_y)$
\begin{equation}\label{eq:LRA_EL2_FV'}
\left(\int_\Omega \nabla ( r_n \otimes s_n - g_{n}) \cdot \nabla (r \otimes s) \right)^2 \le \int_\Omega |\nabla (r
  \otimes s_n)|^2\ \int_\Omega |\nabla (r_n \otimes s)|^2.
\end{equation}
\end{lem}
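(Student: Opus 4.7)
The plan has two parts: derive \eqref{eq:LRA_EL2_FV} by pushing the Taylor expansion from the previous proof one order further, and then obtain the equivalence with \eqref{eq:LRA_EL2_FV'} by a rescaling argument that turns the inequality into a positive-semidefiniteness condition on a $2\times 2$ quadratic form.

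For the first part, I would reuse the expansion already carried out in the proof of \eqref{eq:LRA_EL_FV}, where the quantity
\[
\frac{1}{2}\int_\Omega |\nabla((r_n + \epsilon r) \otimes (s_n + \epsilon s))|^2 - \int_\Omega f_{n-1}\,(r_n + \epsilon r) \otimes (s_n + \epsilon s)
\]
was expanded as ${\mathcal E}(r_n \otimes s_n) + \epsilon I_1 + \epsilon^2 I_2 + O(\epsilon^3)$, with $I_2$ being precisely the left-hand side of \eqref{eq:LRA_EL2_FV}. The minimizing property of $(r_n,s_n)$ yields $\epsilon I_1 + \epsilon^2 I_2 + O(\epsilon^3) \ge 0$ for every $\epsilon \in \R$; using the already established first-order condition $I_1 = 0$, dividing by $\epsilon^2 > 0$ and letting $\epsilon \to 0$ forces $I_2 \ge 0$, which is \eqref{eq:LRA_EL2_FV}.

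For the equivalence with \eqref{eq:LRA_EL2_FV'}, I would apply \eqref{eq:LRA_EL2_FV} with $(r,s)$ replaced by $(\lambda r, \mu s)$ for arbitrary $\lambda, \mu \in \R$. Setting $A = \int_\Omega |\nabla(r \otimes s_n)|^2$ and $B = \int_\Omega |\nabla(r_n \otimes s)|^2$, expanding the square and using the tensor-product symmetry
\[
\int_\Omega \nabla(r \otimes s_n) \cdot \nabla(r_n \otimes s) = \int_\Omega \nabla(r_n \otimes s_n) \cdot \nabla(r \otimes s),
\]
(both sides equal $\int_{\Omega_x} r'r_n' \int_{\Omega_y} s s_n + \int_{\Omega_x} r r_n \int_{\Omega_y} s' s_n'$ by Fubini), the resulting inequality becomes the homogeneous quadratic form $\tfrac{\lambda^2}{2} A + \lambda \mu\, C + \tfrac{\mu^2}{2} B \ge 0$ with
\[
C = 2\int_\Omega \nabla(r_n \otimes s_n) \cdot \nabla(r \otimes s) - \int_\Omega f_{n-1}(r \otimes s).
\]
Since $A, B \ge 0$, non-negativity for all $(\lambda,\mu) \in \R^2$ is equivalent to the discriminant condition $C^2 \le AB$. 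Finally, integration by parts together with $-\Delta g_{n-1} = f_{n-1}$ and $g_n = g_{n-1} - r_n \otimes s_n$ rewrites $C$ as $\int_\Omega \nabla(r_n \otimes s_n - g_n) \cdot \nabla(r \otimes s)$, yielding exactly \eqref{eq:LRA_EL2_FV'}. The converse implication is immediate by specializing the quadratic form to $(\lambda,\mu) = (1,1)$.

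The only genuinely non-routine step I anticipate is spotting the symmetry identity for the cross-terms: without it, the coefficient $C$ would remain asymmetric in $(r,s)$ and would not recombine into a multiple of $\int_\Omega \nabla(r_n \otimes s_n) \cdot \nabla(r \otimes s)$, which is what makes \eqref{eq:LRA_EL2_FV'} appear naturally. Everything else reduces to Taylor expansion, elementary $2\times 2$ linear algebra, and one integration by parts.
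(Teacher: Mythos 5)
Your argument is correct and follows essentially the same route as the paper: expand to second order, use $I_1 = 0$ from the first-order conditions to get $I_2 \ge 0$, then rescale the test functions to turn \eqref{eq:LRA_EL2_FV} into a non-negative quadratic form whose discriminant condition is \eqref{eq:LRA_EL2_FV'}. The only cosmetic difference is that you use a two-parameter scaling $(\lambda r, \mu s)$ where the paper uses $(\lambda r, s)$ (equivalent by homogeneity), and you make explicit the cross-term symmetry $\int_\Omega \nabla(r \otimes s_n)\cdot\nabla(r_n \otimes s) = \int_\Omega \nabla(r_n \otimes s_n)\cdot\nabla(r \otimes s)$, which the paper leaves implicit in its final ``this yields \eqref{eq:LRA_EL2_FV'}'' step.
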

\begin{proof}
Returning to Equation~\eqref{eq:I1I2}, we see that $I_1=0$ and $I_2 \ge
0$, which is exactly~\eqref{eq:LRA_EL2_FV}. For any $\lambda
\in \R$, taking $(\lambda r, s)$ as a test function
in~\eqref{eq:LRA_EL2_FV} shows
$$\frac{1}{2}\int_\Omega | \lambda \nabla ( r
  \otimes s_n) + \nabla ( r_n \otimes s)|^2 +   \int_\Omega \lambda \nabla (r_n
  \otimes s_n) \cdot \nabla ( r \otimes s) - \int_\Omega f_{n-1} \lambda 
r \otimes s
  \ge 0.$$
This equivalently reads
\begin{align*}
\frac{\lambda^2}{2} &\int_\Omega |\nabla (r  \otimes s_n)|^2 + \lambda
\left(\int_\Omega \left( \nabla (r
  \otimes s_n) \cdot \nabla (r_n \otimes s) + \nabla (r_n
  \otimes s_n) \cdot \nabla (r \otimes s )\right)  - \int_\Omega f_{n-1} 
r
  \otimes s \right) \\
& + \frac{1}{2}\int_\Omega |\nabla (r_n  \otimes s)|^2
\ge 0,
\end{align*}
hence
\begin{align*}
&\left(\int_\Omega \left( \nabla (r
  \otimes s_n) \cdot \nabla (r_n \otimes s) + \nabla (r_n
  \otimes s_n) \cdot \nabla (r \otimes s) \right) -
  \int_\Omega f_{n-1} r \otimes s \right)^2 \\
 &\quad \quad \le \int_\Omega |\nabla (r
  \otimes s_n)|^2\ \int_\Omega |\nabla (r_n \otimes s)|^2.
\end{align*}
This yields~\eqref{eq:LRA_EL2_FV'}.
%%%%%%%%%%%%%%%%%%%%%%%%%%%%%%%%%%%%%%%%%%%%%%%%
% En utilisant le fait que
% $\int_\Omega \left( \nabla (r \otimes s_n) \cdot \nabla (r_n \otimes s)  \int_\Omega \left( \nabla (r_n \otimes s_n) \cdot \nabla (r \otimes s)$.
%%%%%%%%%%%%%%%%%%%%%%%%%%%%%%%%%%%%%%%%%%%%%%%%
\end{proof}

We will also need the following optimality property of $(r_n,s_n)$:
\begin{lem}\label{lem:ProdScal}
The functions $(r_n,s_n)$ satisfying~\eqref{eq:LRA_var} are such that: $\forall (r,s) \in H^1_0(\Omega_x)\times H^1_0(\Omega_y)$
$$\|r_n \otimes s_n\|=\frac{\langle r_n \otimes s_n , g_{n-1} \rangle}{\|r_n \otimes s_n\|}
\ge \frac{\langle r \otimes s , g_{n-1} \rangle}{\|r \otimes s\|}.$$
\end{lem}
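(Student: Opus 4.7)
The plan is to split the statement into two pieces. The central equality $\|r_n \otimes s_n\| = \langle r_n \otimes s_n, g_{n-1}\rangle / \|r_n \otimes s_n\|$ is essentially a restatement of~\eqref{eq:ortho2}: dividing both sides of $\langle r_n\otimes s_n, g_{n-1}\rangle = \|r_n\otimes s_n\|^2$ by $\|r_n\otimes s_n\|$ gives the result, as long as $r_n\otimes s_n\neq 0$. The latter is guaranteed by Lemma~\ref{lem:E_neg}: as long as $f_{n-1}\neq 0$ (i.e.\ the algorithm has not yet terminated), the minimum $\mathcal{E}(r_n\otimes s_n)$ is strictly negative, hence $r_n\otimes s_n$ cannot be the zero function.

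For the inequality, the idea is to rewrite the energy in a form that exposes a Rayleigh quotient. Since $f_{n-1} = -\Delta g_{n-1}$ with $g_{n-1}\in H^1_0(\Omega)$, an integration by parts yields $\int_\Omega f_{n-1}\,\varphi = \langle g_{n-1}, \varphi\rangle$ for every $\varphi\in H^1_0(\Omega)$ (interpreted as an $H^{-1}/H^1_0$ duality pairing for $n\ge 2$). Using Lemma~\ref{lem:H10} so that $r\otimes s$ is an admissible test function, we thus get
\begin{equation*}
\mathcal{E}(r\otimes s) = \tfrac{1}{2}\|r\otimes s\|^2 - \langle g_{n-1}, r\otimes s\rangle \qquad \forall (r,s)\in H^1_0(\Omega_x)\times H^1_0(\Omega_y).
\end{equation*}
In particular, applying~\eqref{eq:ortho2} to the minimizer itself yields $\mathcal{E}(r_n\otimes s_n) = -\tfrac{1}{2}\|r_n\otimes s_n\|^2$.

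Next, I would exploit the fact that the admissible set in~\eqref{eq:LRA_var} is invariant under the scaling $(r,s)\mapsto (\lambda r, s)$ for $\lambda\in\R$. For a fixed nonzero tensor product $r\otimes s$, the one-variable minimization
\begin{equation*}
\min_{\lambda\in\R}\mathcal{E}(\lambda r\otimes s) = \min_{\lambda\in\R}\left(\tfrac{\lambda^2}{2}\|r\otimes s\|^2 - \lambda\langle g_{n-1}, r\otimes s\rangle\right)
\end{equation*}
is elementary and equals $-\tfrac{1}{2}\langle g_{n-1}, r\otimes s\rangle^2/\|r\otimes s\|^2$. Because $(r_n,s_n)$ is a global minimizer of $\mathcal{E}$ over tensor products, this quantity is bounded below by $\mathcal{E}(r_n\otimes s_n) = -\tfrac{1}{2}\|r_n\otimes s_n\|^2$, so
\begin{equation*}
\|r_n\otimes s_n\|^2 \;\ge\; \frac{\langle g_{n-1}, r\otimes s\rangle^2}{\|r\otimes s\|^2}.
\end{equation*}
Taking square roots and noting that $|\langle g_{n-1}, r\otimes s\rangle|/\|r\otimes s\|$ dominates $\langle g_{n-1}, r\otimes s\rangle/\|r\otimes s\|$ yields the announced inequality; the trivial case $r\otimes s = 0$ causes no issue since then the right-hand side is not defined and the statement is vacuous.

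No step is genuinely difficult. The only mildly delicate point is the integration-by-parts identity $\int f_{n-1}\,\varphi = \langle g_{n-1},\varphi\rangle$, which must be read as a duality pairing when $n\ge 2$ (in which case $f_{n-1}$ lies in $H^{-1}(\Omega)$ and not necessarily in $L^2(\Omega)$); this is routine but worth stating explicitly, since the same reformulation of the energy through the Riesz representative $g_{n-1}$ is what drives the whole convergence analysis of the next section.
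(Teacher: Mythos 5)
Your proof is correct. Both the equality (from~\eqref{eq:ortho2}) and the inequality reach the right conclusion. However, your route to the inequality is genuinely cleaner and somewhat different in structure from the paper's. The paper introduces the supremum
$$M=\sup_{(u,v),\ \|u\otimes v\|=1}\langle u\otimes v,g\rangle,$$
takes a maximizing sequence, compares the minimizer against the projections $\langle g,u^k\otimes v^k\rangle\,u^k\otimes v^k$, and squeezes the chain of inequalities
$$\|g-r_1\otimes s_1\|^2\le\|g\|^2-M^2\le\|g\|^2-\|r_1\otimes s_1\|^2=\|g-r_1\otimes s_1\|^2$$
to conclude that $M=\|r_1\otimes s_1\|$. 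Your argument short-circuits all of that: you fix an arbitrary tensor product $r\otimes s$, minimize $\mathcal{E}(\lambda\, r\otimes s)=\tfrac{\lambda^2}{2}\|r\otimes s\|^2-\lambda\langle g_{n-1},r\otimes s\rangle$ over $\lambda$ explicitly, and compare the resulting value $-\tfrac12\langle g_{n-1},r\otimes s\rangle^2/\|r\otimes s\|^2$ with $\mathcal{E}(r_n\otimes s_n)=-\tfrac12\|r_n\otimes s_n\|^2$. This avoids introducing $M$ and the limiting argument along a maximizing sequence, and it delivers the inequality pointwise for every admissible $(r,s)$ rather than as the boundary case of a supremum. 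The underlying mechanism is the same (the paper's comparison against $\langle g,u^k\otimes v^k\rangle\,u^k\otimes v^k$ is exactly the optimal scaling for a unit vector), but your formulation is more direct and arguably easier to generalize. Two minor points worth making explicit in a polished write-up: (a) the scaling $(r,s)\mapsto(\lambda r,s)$ stays inside $H^1_0(\Omega_x)\times H^1_0(\Omega_y)$, so $\lambda\,r\otimes s$ is indeed an admissible competitor in \eqref{eq:LRA_var}; and (b) passing from $\|r_n\otimes s_n\|^2\ge\langle g_{n-1},r\otimes s\rangle^2/\|r\otimes s\|^2$ to the stated inequality uses $|\cdot|\ge(\cdot)$, which you do note. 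Both are routine.
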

\begin{proof}
We may
assume without loss of generality that $n=1$. The first equality is~\eqref{eq:ortho2}. To prove the inequality, let us introduce the supremum:
$$M=\sup_{(u,v) \in  H^1_0(\Omega_x)\times H^1_0(\Omega_y),
  \| u \otimes v \|=1} \langle u \otimes v , g \rangle.$$
Using~\eqref{eq:ortho2}, we have
\begin{equation}\label{eq:3}
\|r_1 \otimes s_1\| = \frac{\langle r_1 \otimes s_1 ,g \rangle}{\|r_1
  \otimes s_1\|} \le M,
\end{equation}
by definition of $M$. On the other hand,  consider
$(u^k,v^k)_{k \ge 0}$ a maximizing sequence associated to the supremum~$M$: $\| u^k \otimes v^k \|=1$ and $\lim_{k \to \infty}
\langle u^k \otimes v^k , g \rangle= M$.
We have, using~\eqref{eq:LRA_var}, for all  $k \ge 0$,
\begin{align*}
\|g-r_1 \otimes s_1\|^2
& \le \| g - \langle g , u^k \otimes v^k \rangle \, u^k \otimes v^k  \|^2\\
&= \| g \|^2 - \langle g , u^k \otimes v^k \rangle^2,
\end{align*}
and, letting $k \to \infty$,
\begin{equation}\label{eq:4}
\|g-r_1 \otimes s_1\|^2 \le \| g \|^2 - M^2.
\end{equation}
Combining~\eqref{eq:3} and~\eqref{eq:4}, we get
\begin{align*}
\|g-r_1 \otimes s_1\|^2
& \le \| g \|^2 - M^2 \\
& \le \| g \|^2 - \|r_1 \otimes s_1 \|^2 \\
&  = \| g \|^2 - 2 \langle g , r_1 \otimes s_1 \rangle + \|r_1 \otimes
s_1\|^2\\
&= \| g - r_1 \otimes s_1\|^2
\end{align*}
so that all the inequalities are actually equalities. By using the fact that, by~\eqref{eq:3}, $M \ge 0$, we thus have
$$M=\|r_1 \otimes s_1 \|=\frac{\langle r_1 \otimes s_1 ,g \rangle}{\|r_1
  \otimes s_1\|}.$$
This concludes the
proof.
\end{proof}

\medskip

\subsection{Some preliminary remarks on  the non variational
  approach implemented}
\label{sec:nonvar}
Before we get to the proof of the convergence of the approach in the
next section, let us conclude Section~\ref{sec:presentation} by some comments that
relates the theoretical framework developed here to the practice.

\medskip

It is important to already note, although we will return to this in
Section~\ref{sec:discussion} below, that the Euler-Lagrange equation is
indeed the form of the algorithm manipulated in practice by the authors
of~\cite{ammar-mokdad-chinesta-keunings-06}. The above variational setting is somewhat difficult to implement in
practice. It requires to solve for the minimizers of \eqref{eq:LRA_var}
(and \eqref{eq:LRA_varo} respectively), which can
be
extremely demanding
computationally.   In their implementation of the approach (developed
independently from the above nonlinear approximation theoretic
framework), Ammar {\em et al.} therefore propose to search for the iterate
$(r_n,s_n)$ (and respectively $(r_n^o,s_n^o)$) not as a minimizer to
optimization problems~\eqref{eq:LRA_var} and~\eqref{eq:LRA_varo}, but as
a solution to the associated Euler-Lagrange equations (first order optimality conditions).
The Pure Greedy algorithm is thus replaced by:
set $f_0=f$, and at iteration $n \ge 1$,
{\tt
\begin{enumerate}
\item Find $r_n \in H^1_0(\Omega_x)$ and $s_n \in H^1_0(\Omega_y)$ such
  that, for all functions $(r,s) \in H^1_0(\Omega_x) \times H^1_0(\Omega_y)$,
~\eqref{eq:LRA_EL_FV} (or its equivalent form \eqref{eq:LRA_EL})  holds.
\item Set $f_{n}=f_{n-1} +\Delta ( r_n \otimes s_n)$.
\item If $\|f_{n}\|_{H^{-1}(\Omega)} \ge \varepsilon$, proceed to iteration $n+1$. Otherwise, stop.
\end{enumerate}
}
The Orthogonal Greedy Algorithm is modified likewise.

As already explained in the introduction, and in sharp contrast to the
situation encountered for linear problems, being a solution to the
Euler-Lagrange equation does not guarantee being a minimizer in this
nonlinear framework. We will point out difficulties originating from
this in Section~\ref{sec:discussion}.

In addition to the above theoretical difficulty, and in fact somehow
entangled to it, we have to mention that of course,
 the Euler-Lagrange equations~\eqref{eq:LRA_EL}, as a  nonlinear
 system,  need to be solved iteratively.   In~\cite{ammar-mokdad-chinesta-keunings-06}, a simple 
fixed point procedure is employed: choose $(r_n^0,s_n^0) \in H^1_0(\Omega_x)\times H^1_0(\Omega_y)$ and, at iteration $k \ge 0$, compute $(r_n^k,s_n^k) \in H^1_0(\Omega_x)\times H^1_0(\Omega_y)$ solution to:
\begin{equation}\label{eq:LRA_FP}
\left\{
\begin{array}{l}
\displaystyle - \int_{\Omega_y} |s_n^k|^2  (r_n^{k+1})''  + \int_{\Omega_y}
|(s_n^k)'|^2  r_n^{k+1} = \int_{\Omega_y} f_{n-1} s_n^k,\\
\\
\displaystyle - \int_{\Omega_x} |r_n^{k+1}|^2  (s_n^{k+1})''  + \int_{\Omega_x}
|(r_n^{k+1})'|^2  s_n^{k+1} = \int_{\Omega_x} f_{n-1} r_n^{k+1},
\end{array}
\right.
\end{equation}
until convergence is reached.
We will also discuss below the convergence properties of this procedure on simple examples.

\begin{rem}\label{rem:RHS}
In practice (bearing in mind that the approach has been designed to solve 
high-dimensional problems), in order for the right-hand side terms
in~\eqref{eq:LRA_FP} to be computable, the function $f$ needs to be
expressed as a sum of tensor products. Otherwise, computing high
dimensional integrals would be necessary, and this is a task of the same computational
 complexity as the original Poisson problem.  The function $f$ thus 
needs to enjoy some appropriate separation property with respect to the different coordinates.

If $f$ is not given in such a form, it may be possible to first apply the Singular Value Decomposition algorithm to get a good estimate of $f$ as 
a sum of tensor products (see Section~\ref{sec:SVD}).
\end{rem}

\begin{rem}\label{rem:HD}
In dimension $N \ge 2$ (on a parallelepipedic domain $\Omega=\Omega_{x_1} \times \ldots \times \Omega_{x_N}$), the Euler-Lagrange equations~\eqref{eq:LRA_EL_FV} become: find functions $(r^1_n,\ldots,r^N_n) \in 
H^1_0(\Omega_{x_1})\times \ldots \times H^1_0(\Omega_{x_N})$ such that: for any
functions $(r^1,\ldots,r^N) \in H^1_0(\Omega_{x_1})\times \ldots \times H^1_0(\Omega_{x_N})$,
\begin{align}
\int_{\Omega} &\nabla (r^1_n \otimes \ldots \otimes r^N_n) \cdot \sum _{k=1}^N \nabla (r^1_n \otimes \ldots \otimes r^{k-1}_n \otimes r^k \otimes r^{k+1}_n \otimes \ldots \otimes r^N_n) \nonumber\\
& =  \int_{\Omega} f_{n-1} \sum _{k=1}^N (r^1_n \otimes \ldots \otimes r^{k-1}_n \otimes r^k \otimes r^{k+1}_n \otimes \ldots \otimes r^N_n).\label{eq:LRA_EL_FV_HD}
\end{align}
This is a nonlinear system of $N$ equations, which only involves one-dimensional integrals by Fubini theorem, provided that the data $f$ is
expressed as a sum of tensor products (see Remark~\ref{rem:RHS}).
\end{rem}

\begin{rem}\label{rem:disc}
We presented the algorithms without space discretization, which is required for the practical implementation. In practice, finite element spaces $V^h_x$ (resp. $V^h_y$) are used to discretized $H^1_0(\Omega_x)$ (resp. $H^1_0(\Omega_y)$), where $h>0$ denotes a space discretization parameter. The space discretized version of~\eqref{eq:LRA_EL_FV} thus writes: find $(r^h_n,s^h_n) \in V^h_x \times V^h_y$ such that, for any
functions $(r^h,s^h) \in V^h_x \times V^h_y$
\begin{equation}\label{eq:LRA_EL_FV_disc}
\int_{\Omega} \nabla (r^h_n \otimes s^h_n) \cdot \nabla (r^h_n \otimes s^h + r^h \otimes
s^h_n) =  \int_{\Omega} f^h_{n-1} (r^h_n \otimes s^h + r^h \otimes s^h_n).
\end{equation}
\end{rem}

\section{Convergence}
\label{sec:convergence}

To start with, we prove that the approach converges. Then we will turn
to the rate of convergence.
\subsection{Convergence of the method}

\begin{theo}\label{theo:CV_PGA} {\bf [Pure Greedy Algorithm]}

Consider the Pure Greedy Algorithm, and  assume first that $(r_n,s_n)$ satisfies the Euler-Lagrange equations~\eqref{eq:LRA_EL_FV}.
Denote by
\begin{equation}
E_n=\frac{1}{2}\int_\Omega |\nabla (r_n \otimes s_n)|^2 - \int_\Omega f_{n-1}  \,r_n \otimes s_n
\end{equation}
 the energy at iteration $n$.
We have  
\begin{equation}\label{eq:serCV}
\displaystyle\sum_n \int_\Omega |\nabla (r_n \otimes
s_n)|^2  = - 2 \sum_n E_n < \infty.
\end{equation}
Assume in addition that $(r_n,s_n)$ is a minimizer of \eqref{eq:LRA_var}. Then, 
\begin{equation}\label{eq:gCV}
\lim_{n \to \infty} g_n = 0 \text{ in $H^1_0(\Omega)$.}
\end{equation}
\end{theo}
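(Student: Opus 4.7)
\emph{Summability.} The first equality follows from the Euler--Lagrange identity \eqref{eq:ortho2}, $\langle r_n\otimes s_n,g_{n-1}\rangle=\|r_n\otimes s_n\|^2$, together with $\int_\Omega f_{n-1}\,r_n\otimes s_n=\langle g_{n-1},r_n\otimes s_n\rangle$ (obtained by testing $-\Delta g_{n-1}=f_{n-1}$ against $r_n\otimes s_n\in H^1_0(\Omega)$, which is legitimate by Lemma~\ref{lem:H10}). Plugging both into the definition of $E_n$ gives $E_n=-\tfrac{1}{2}\|r_n\otimes s_n\|^2$, i.e.\ $\sum_n\|r_n\otimes s_n\|^2=-2\sum_n E_n$. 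For finiteness, expanding $\|g_n\|^2=\|g_{n-1}-r_n\otimes s_n\|^2$ and reinserting \eqref{eq:ortho2} yields the Pythagorean identity $\|g_n\|^2=\|g_{n-1}\|^2-\|r_n\otimes s_n\|^2$. Telescoping gives $\sum_{n=1}^N\|r_n\otimes s_n\|^2=\|g\|^2-\|g_N\|^2\le\|g\|^2<\infty$.

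\emph{Setup for strong convergence.} The same Pythagorean identity shows that $\|g_n\|^2$ is nonincreasing, hence converges to some $L\ge 0$, and in particular $\alpha_n:=\|r_n\otimes s_n\|\to 0$ with $\sum_n\alpha_n^2\le\|g\|^2$. The minimizer hypothesis now enters decisively through Lemma~\ref{lem:ProdScal}, which, applied at iteration $n+1$ and combined with a sign flip in $r$, delivers the uniform ``greedy'' bound
\begin{equation*}
|\langle g_n,r\otimes s\rangle|\le\alpha_{n+1}\|r\otimes s\|\qquad\text{for every }(r,s)\in H^1_0(\Omega_x)\times H^1_0(\Omega_y).
\end{equation*}

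\emph{Key estimate.} Following the DeVore--Temlyakov strategy, fix $\varepsilon>0$ and use density of finite linear combinations of tensor products in $H^1_0(\Omega)$ (guaranteed by the Hilbert basis $\sin(m\pi x)\sin(p\pi y)$) to select $h=\sum_{i=1}^K c_i\,u_i\otimes v_i$ with $\|u_i\otimes v_i\|=1$, $A:=\sum_i|c_i|<\infty$, and $\|g-h\|<\varepsilon$. Writing $g=g_n+\sum_{k=1}^n r_k\otimes s_k$ and expanding,
\begin{equation*}
\|g_n\|^2=\langle g_n,g\rangle-\sum_{k=1}^n\langle g_n,r_k\otimes s_k\rangle.
\end{equation*}
The greedy bound applied to $h$ gives $|\langle g_n,h\rangle|\le A\alpha_{n+1}$, hence $|\langle g_n,g\rangle|\le A\alpha_{n+1}+\|g\|\varepsilon$; applied to each $r_k\otimes s_k$ it yields $|\langle g_n,r_k\otimes s_k\rangle|\le\alpha_{n+1}\alpha_k$, and a Cauchy--Schwarz over $k$ produces
\begin{equation*}
\|g_n\|^2\le A\alpha_{n+1}+\|g\|\varepsilon+\alpha_{n+1}\sqrt{n}\,\|g\|.
\end{equation*}

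\emph{Main obstacle and conclusion.} The delicate point is the factor $\alpha_{n+1}\sqrt{n}$, which is \emph{not} controlled by $\sum\alpha_n^2<\infty$ for the entire sequence $n$. The saving observation is the elementary fact that $\sum_n a_n^2<\infty$ forces $\liminf_n n\,a_n^2=0$: along a subsequence $n_j\to\infty$ one has $\alpha_{n_j+1}\sqrt{n_j}\to 0$ while also $\alpha_{n_j+1}\to 0$. Passing to the limit along $(n_j)$ in the displayed inequality then gives $L\le\|g\|\varepsilon$, and since $\varepsilon$ is arbitrary, $L=0$, i.e.\ $\|g_n\|\to 0$. The whole argument relies on the uniform greedy bound from Lemma~\ref{lem:ProdScal}, hence on the minimizer hypothesis; a bare Euler--Lagrange solution would not supply the control needed to rule out $L>0$, which is precisely why the second conclusion of the theorem requires strengthening the hypothesis.
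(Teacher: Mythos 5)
Your proof of the summability part \eqref{eq:serCV} matches the paper's exactly (Euler--Lagrange relation \eqref{eq:ortho2} plus Pythagoras from \eqref{eq:ortho}). For the strong convergence \eqref{eq:gCV}, however, you take a genuinely different and correct route. The paper proceeds in two stages: first it extracts a weakly convergent subsequence $g_n\rightharpoonup g_\infty$, passes to the limit in the inequality $\mathcal{E}(r\otimes s)-\langle g_{n-1},r\otimes s\rangle\ge E_n$ coming from minimality, concludes $\langle g_\infty,r\otimes s\rangle=0$ for all tensor products and hence $g_\infty=0$ via Lemma~\ref{lem:distrib}; second, it upgrades weak to strong convergence by the Jones argument from~\cite{jones-87}, building the specially chosen subsequence $\phi(k)$ and showing $(g_{\phi(l)-1})$ is Cauchy. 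You instead go directly after strong convergence: you turn Lemma~\ref{lem:ProdScal} (with a sign flip) into the uniform greedy bound $|\langle g_n,r\otimes s\rangle|\le\|r_{n+1}\otimes s_{n+1}\|\,\|r\otimes s\|$, approximate $g$ in $H^1_0$ by a finite $\ell^1$-bounded tensor sum $h$ (legitimate since separable eigenfunction products are complete), expand $\|g_n\|^2=\langle g_n,g\rangle-\sum_{k\le n}\langle g_n,r_k\otimes s_k\rangle$, bound the last sum by Cauchy--Schwarz to get the $\alpha_{n+1}\sqrt{n}$ term, and kill it along a subsequence where $n\,\alpha_n^2\to0$ (which exists because $\sum\alpha_n^2<\infty$). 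Since $\|g_n\|$ is already known to converge, the subsequential bound $L^2\le\|g\|\varepsilon$ suffices. This buys a single self-contained argument that avoids both the weak-compactness step and Jones' indexing; the paper's route buys a cleaner split between the soft step (identifying the weak limit, which only needs the minimizer inequality and Lemma~\ref{lem:distrib}) and the hard step (Cauchy in norm). Both approaches locate the use of minimality in the same place, namely the optimality of $r_n\otimes s_n$ among all tensor products, expressed through Lemma~\ref{lem:ProdScal}; your closing remark about why a bare Euler--Lagrange solution would not do is exactly right. One trivial slip: the subsequential estimate yields $L^2\le\|g\|\varepsilon$ rather than $L\le\|g\|\varepsilon$, but the conclusion $L=0$ is unaffected.
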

Immediate consequences of~\eqref{eq:serCV} and~\eqref{eq:gCV} are
$$\lim_{n \to \infty} E_n = \lim_{n \to \infty} \| r_n \otimes
s_n\| = 0,$$
 and
$$\lim_{n \to \infty} f_n = 0  \text{ in $H^{-1}(\Omega)$}.$$
\begin{proof}
Let us first suppose that $(r_n,s_n)$ only satisfies the Euler-Lagrange equations~\eqref{eq:LRA_EL_FV}. We notice that, using~\eqref{eq:ortho}
\begin{align}
\|g_{n-1}\|^2
&=\|g_{n} + r_n \otimes s_n\|^2 \nonumber\\
&=\|g_{n}\|^2 + \|r_n \otimes s_n\|^2. \label{eq:pyth}
\end{align}
Thus, $\|g_n\|^2$ is a nonnegative non increasing sequence. Hence it converges. This implies that $\sum_n |\nabla (r_n \otimes
s_n)|^2 < \infty$.

Next, notice that
\begin{align*}
E_n
&= \frac{1}{2}\int_\Omega |\nabla (r_n \otimes s_n)|^2 - \int_\Omega f_{n-1}
\,r_n \otimes s_n\\
&=\frac{1}{2}\int_\Omega |\nabla (r_n \otimes s_n)|^2 - \int_\Omega \nabla
g_{n-1} \cdot
 \nabla (r_n \otimes s_n)\\
&=- \frac{1}{2}\int_\Omega |\nabla (r_n \otimes s_n)|^2,
\end{align*}
since by~\eqref{eq:ortho2}, $\int_\Omega \nabla
g_{n-1} \cdot
 \nabla (r_n \otimes s_n)=\int_\Omega |\nabla (r_n \otimes
 s_n)|^2$. This proves the first part of the theorem. At this stage, we have only used that
$(r_n,s_n)$ satisfies the Euler-Lagrange equations~\eqref{eq:LRA_EL}.

To conclude that $\lim_{n \to \infty} f_n=  0$, we now need to assume that
$(r_n,s_n)$ indeed satisfies the minimization problem~\eqref{eq:LRA_var}. We know
that $\|g_n\|^2$ is a bounded sequence, and therefore, we may assume (up
to the extraction of a subsequence) that $g_n$ converges weakly in
$H^1_0(\Omega)$ to some $g_\infty \in H^1_0(\Omega)$. For any $n \ge 1$ and for
any functions $(r,s) \in H^1_0(\Omega_x) \times H^1_0(\Omega_y)$,
$$\frac{1}{2}\int_\Omega |\nabla (r \otimes s)|^2 - \int_\Omega \nabla g_{n-1}
\cdot \nabla (r
\otimes s )\ge E_n.$$
By passing to the limit this inequality, we have
$$\frac{1}{2}\int_\Omega |\nabla (r \otimes s)|^2 - \int_\Omega \nabla g_\infty \cdot
\nabla (r \otimes s )\ge 0.$$
This implies that for any functions $(r,s) \in H^1_0(\Omega_x) \times
H^1_0(\Omega_y)$,
$$\int_\Omega \nabla g_\infty \cdot \nabla (r \otimes s)=  0.$$
Thus, by Lemma~\ref{lem:distrib}, $-\Delta g_\infty=  0$ in the
distributional sense, which, since $g_\infty \in H^1_0(\Omega)$, implies
$g_\infty=0$. This shows that there is
only one possible
limit for the subsequence $g_n$ and thus that the whole sequence
itself weakly converges to $0$.

The convergence of $g_n$ to $0$ is actually strong in
$H^1_0(\Omega)$. The argument we use here is taken from~\cite{jones-87}. 
Using
Lemma~\ref{lem:ProdScal}, we have: for any $n \ge m \ge 0$
\begin{align*}
\| g_{n} - g_m \|^2
&=\| g_{n} \|^2 +  \| g_m \|^2 - 2 \left\langle g_n , \left(g_n +  \sum_{k=m+1}^n r_k
\otimes s_k\right) \right\rangle\\
&=\| g_{n} \|^2 +  \| g_m \|^2 - 2 \| g_n \|^2 - 2  \sum_{k=m+1}^n
\langle g_n, r_k \otimes s_k  \rangle\\
&\le - \| g_{n} \|^2 +  \| g_m \|^2  + 2  \sum_{k=m+1}^n
\|r_k \otimes s_k\| \|r_{n+1} \otimes s_{n+1}\|.
\end{align*}
Define $\phi(1)=1$,
$\phi(2)=\arg \min_{n>\phi(1)} \{\|r_n \otimes s_n\| \le \|r_{\phi(1)}
\otimes s_{\phi(1)}\|  \}$, and, by induction,
$$\phi(k+1)=\arg \min_{n>\phi(k)} \{\|r_n \otimes s_n\| \le \|r_{\phi(k)}
\otimes s_{\phi(k)}\|  \}.$$
Notice that $\lim_{k \to \infty} \phi(k)=\infty$ since $\lim_{k \to
  \infty} \|r_k \otimes s_k\|=0$.
For example, if $(\|r_k \otimes s_k\|)_{k \ge 1}$ is a decreasing
sequence, then $\phi(k)=k$. Now, we have: for any $l \ge k \ge 0$
\begin{align*}
\| g_{\phi(l)-1} - g_{\phi(k)-1} \|^2
&\le - \| g_{\phi(l)-1} \|^2 +  \| g_{\phi(k)-1} \|^2  + 2
\sum_{i=\phi(k)}^{\phi(l)-1} \|r_i \otimes s_i\| \|r_{\phi(l)} \otimes 
s_{\phi(l)}\|\\
&\le - \| g_{\phi(l)-1} \|^2 +  \| g_{\phi(k)-1} \|^2  + 2
\sum_{i=\phi(k)}^{\phi(l)-1} \|r_i \otimes s_i\|^2.
\end{align*}
Since $\sum_{k \ge 1} \|r_k \otimes s_k\|^2 < \infty$ and $(\|g_n\|)_{n
  \ge 1}$ is converging, the previous inequality shows that the subsequence
$(g_{\phi(l)-1})_{l \ge 0}$ is a Cauchy sequence, and therefore strongly
converges to $0$ (recall it is already known that $g_n$ weakly converges
to $0$). Since $\|g_n\|$ is itself a converging sequence, this shows
that
$$\lim_{n \to \infty} \|g_n\| = 0.$$
%%%%%%%%%%%%%%%%%%%%%%%%%%%%%%%%%%%%%%%%%%%%%%%%
% On savait que la suite $\|g_n\|$ avait une limite. On montre qu'il y a
% une sous-suite telle que $\|g_\phi(n)\|$ tend vers 0. Donc $\|g_n\|$
% tend vers 0.
%%%%%%%%%%%%%%%%%%%%%%%%%%%%%%%%%%%%%%%%%%%%%%%%
\end{proof}

\medskip

A similar result holds for the Orthogonal Greedy Algorithm.
\begin{theo}\label{theo:CV_OGA} {\bf [Orthogonal Greedy Algorithm]}

Consider the Orthogonal Greedy Algorithm, and  assume first that
$(r_n^o,s_n^o)$ only satisfies the Euler-Lagrange
equations~\eqref{eq:LRA_EL_FV} associated with~\eqref{eq:LRA_varo} (thus
with $(r_n,s_n,f_{n-1})=(r_n^o,s_n^o,f_{n-1}^o)$
in~\eqref{eq:LRA_EL_FV}). Denote by
\begin{equation}
E_n^o=\frac{1}{2}\int_\Omega |\nabla (r_n^o \otimes s_n^o)|^2 - \int_\Omega f_{n-1}^o  \,r_n^o \otimes s_n^o
\end{equation}
the energy at iteration $n$. We have  
\begin{equation}\label{eq:serCVo}
\displaystyle\sum_n \int_\Omega |\nabla (r_n^o \otimes
s_n^o)|^2  =- 2 \sum_n E_n^o < \infty.
\end{equation}
Assume in addition that $(r_n^o,s_n^o)$ is indeed a minimizer to  the optimization problem~\eqref{eq:LRA_varo}. Then, 
\begin{equation}\label{eq:gCVo}
\lim_{n \to \infty} g_n^o = 0 \text{ in $H^1_0(\Omega)$.}
\end{equation}
\end{theo}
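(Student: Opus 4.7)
The strategy is to mirror the proof of Theorem~\ref{theo:CV_PGA}, replacing the Pythagorean identity~\eqref{eq:pyth} by the orthogonality built into the Galerkin step of the Orthogonal Greedy Algorithm. The key observation is that the optimization~\eqref{eq:LRA_galo} makes $g - g_n^o = \sum_{k=1}^n \alpha_k\, r_k^o \otimes s_k^o$ the $\langle\cdot,\cdot\rangle$-orthogonal projection of $g$ onto $V_n := \mathrm{span}(r_1^o \otimes s_1^o, \ldots, r_n^o \otimes s_n^o)$; in particular $g_n^o \perp V_n$.

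For the first part (identity~\eqref{eq:serCVo}), I would start from the Euler-Lagrange identity~\eqref{eq:ortho2}, applied to the orthogonal variant, to obtain $\langle r_n^o \otimes s_n^o, g_{n-1}^o\rangle = \|r_n^o \otimes s_n^o\|^2$. Since the choice $(\alpha_1^{(n-1)}, \ldots, \alpha_{n-1}^{(n-1)}, 1)$ is admissible in~\eqref{eq:LRA_galo} (where the $\alpha_k^{(n-1)}$ denote the Galerkin coefficients obtained at step $n-1$), Galerkin optimality yields $\|g_n^o\|^2 \le \|g_{n-1}^o - r_n^o \otimes s_n^o\|^2$. Expanding the right-hand side using the above Euler-Lagrange identity gives
$$\|g_n^o\|^2 \le \|g_{n-1}^o\|^2 - \|r_n^o \otimes s_n^o\|^2,$$
and telescoping delivers $\sum_n \|r_n^o \otimes s_n^o\|^2 \le \|g\|^2 < \infty$. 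The computation showing $E_n^o = -\tfrac{1}{2}\|r_n^o \otimes s_n^o\|^2$ is then identical to that in Theorem~\ref{theo:CV_PGA}, using~\eqref{eq:ortho2} to rewrite $\int_\Omega f_{n-1}^o\, r_n^o \otimes s_n^o = \langle g_{n-1}^o, r_n^o \otimes s_n^o\rangle = \|r_n^o \otimes s_n^o\|^2$.

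Turning to the convergence of $g_n^o$ under the minimizer assumption, I would first prove that $(g_n^o)$ is strongly Cauchy in $H^1_0(\Omega)$. Since $g_k^o \perp V_k$ and $g_{k-1}^o - g_k^o \in V_k \cap V_{k-1}^\perp$ (as the difference of nested $H^1_0$-orthogonal projections of $g$), the increments $(g_{k-1}^o - g_k^o)_k$ are pairwise orthogonal and orthogonal to $g_k^o$, so that $\|g_{k-1}^o\|^2 = \|g_k^o\|^2 + \|g_{k-1}^o - g_k^o\|^2$. Telescoping gives $\sum_k \|g_{k-1}^o - g_k^o\|^2 \le \|g\|^2 < \infty$, and pairwise orthogonality then yields $\|g_n^o - g_m^o\|^2 = \sum_{k=n+1}^m \|g_{k-1}^o - g_k^o\|^2 \to 0$, so $g_n^o \to g_\infty^o$ strongly in $H^1_0(\Omega)$.

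To identify $g_\infty^o = 0$, I would pass to the limit in the variational inequality
$$\frac{1}{2}\int_\Omega |\nabla(r \otimes s)|^2 - \int_\Omega \nabla g_{n-1}^o \cdot \nabla(r \otimes s) \ge E_n^o,$$
valid for every $(r,s) \in H^1_0(\Omega_x) \times H^1_0(\Omega_y)$ by the minimizer property, using the strong convergence $g_{n-1}^o \to g_\infty^o$ and $E_n^o \to 0$. Rescaling $r \to \varepsilon r$ and letting $\varepsilon \to 0$ yields $\int_\Omega \nabla g_\infty^o \cdot \nabla(r \otimes s) = 0$ for all tensor products, and Lemma~\ref{lem:distrib} combined with the zero Dirichlet boundary condition forces $g_\infty^o = 0$. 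The only real technical point is the correct book-keeping of the Galerkin projection; once this is in place the strong Cauchy property is in fact cheaper to obtain here than the Jones-type argument needed for the Pure Greedy Algorithm.
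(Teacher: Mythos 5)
Your proof is correct, and the first part (deriving~\eqref{eq:serCVo} from the Euler--Lagrange identity~\eqref{eq:ortho2} and Galerkin optimality, telescoping, and then $E_n^o=-\tfrac12\|r_n^o\otimes s_n^o\|^2$) matches the paper's argument verbatim. Where you diverge is the strong convergence of $g_n^o$. The paper first establishes \emph{weak} convergence of $g_n^o$ to $0$ by passing to the limit in the variational inequality (exactly as for the Pure Greedy Algorithm, with the attendant weak-compactness and uniqueness-of-limit argument), and then upgrades to strong convergence in one line via the Galerkin orthogonality $\|g_n^o\|^2=\langle g_n^o,g\rangle$, whose right-hand side tends to $0$ by the weak convergence. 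You instead observe that $g-g_n^o=P_{V_n}g$ is the $H^1_0$-orthogonal projection of $g$ onto the nested spaces $V_n$, so the increments $g_{k-1}^o-g_k^o=P_{V_k}g-P_{V_{k-1}}g$ are pairwise orthogonal (and orthogonal to $g_k^o$), giving $\sum_k\|g_{k-1}^o-g_k^o\|^2\le\|g\|^2$ and hence the Cauchy property directly, \emph{without} any weak-compactness extraction; the limit is then identified via the same variational inequality and Lemma~\ref{lem:distrib}. Both routes are sound; yours has the modest advantage of being self-contained (no appeal to weak sequential compactness in $H^1_0$ or to uniqueness of the weak limit), while the paper's is shorter on the page because it reuses machinery already set up for Theorem~\ref{theo:CV_PGA}. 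Note also that your Cauchy argument shows, as a by-product, the exact identity $\|g_m^o\|^2-\|g_n^o\|^2=\sum_{k=m+1}^n\|g_{k-1}^o-g_k^o\|^2$ for $m\le n$, which is slightly finer than the telescoped inequality the paper records.
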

Immediate consequences of~\eqref{eq:serCVo} and~\eqref{eq:gCVo} are
$$\lim_{n \to \infty} E_n^o = \lim_{n \to \infty} \| r_n^o \otimes
s_n^o\| = 0,$$
 and
$$\lim_{n \to \infty} f_n^o = 0  \text{ in $H^{-1}(\Omega)$}.$$
\begin{proof}
Let us first assume that $(r_n^o,s_n^o)$ only satisfies the Euler-Lagrange equations~\eqref{eq:LRA_EL_FV} (with $(r_n,s_n,f_{n-1})=(r_n^o,s_n^o,f_{n-1}^o)$ in~\eqref{eq:LRA_EL_FV}). Notice that by~\eqref{eq:LRA_galo} 
and~\eqref{eq:ortho2}:
\begin{align*}
\|g_{n}^o\|^2
&=\bigg\|g - \sum_{k=1}^n \alpha_k r_k^o \otimes s_k^o \bigg\|^2\\
&\le \|g_{n-1}^o - r_n^o \otimes s_n^o \|^2\\
&=\|g_{n-1}^o\|^2 - \|r_n^o \otimes s_n^o \|^2.
\end{align*}
Thus, $\|g_{n}^o\|^2$ is a nonnegative non increasing sequence. Hence it converges. This implies that
$\sum_{k \ge 1} \|r_k^o \otimes s_k^o \|^2 < \infty$, and proves the first part of the theorem, using the same arguments as in the proof of Theorem~\ref{theo:CV_PGA}.

Let us now assume in addition that $(r_n^o,s_n^o)$ is a minimizer to \eqref{eq:LRA_varo}. For fixed $r$ and $s$, we 
derive from~\eqref{eq:LRA_varo}:
$$- \frac{1}{2}\int_\Omega |\nabla( r_n^o \otimes s_n^o)|^2  = \frac{1}{2}\int_\Omega |\nabla (r_n^o \otimes s_n^o)|^2 - \int_\Omega f_{n-1}^o
\,r_n^o \otimes s_n^o \le \frac{1}{2}\int_\Omega |\nabla (r \otimes s)|^2
- \int_\Omega f_{n-1}^o \,r \otimes s.$$
Letting $n$ go to infinity, and using the same arguments as in the proof 
of Theorem~\ref{theo:CV_PGA}, this implies that $g_n^o$ weakly converges to $0$ in $H^1_0(\Omega)$.
The proof of the strong convergence of $g_n^o$ to zero is then easy since, using the Euler 
Lagrange equations associated to~\eqref{eq:LRA_galo}:
$$\|g_n^o\|^2 = \langle g_n^o , g \rangle,$$
and the right-hand side converges to $0$.
\end{proof}

\subsection{Rate of convergence of the method}

We now present an estimate of the rate of convergence for both the Pure and the Orthogonal Greedy Algorithms. These results are
borrowed from~\cite{devore-temlyakov-96}. We begin by only citing the result for Pure Greedy Algorithm. On the other hand, with a view
to showing the typical mathematical ingredients at play, we outline the
proof of convergence of the Orthogonal Greedy Algorithm, contained in
the original article~\cite{devore-temlyakov-96}.

\medskip

We first need to introduce a functional space adapted to the convergence
analysis (see~\cite{barron-cohen-dahmen-devore-08,devore-temlyakov-96}).
\begin{defi}
We define the ${\mathcal L}^1$ space as
$${\mathcal L}^1=\left\{g=\sum_{k \ge 0} c_k u_k \otimes v_k,
\text{ where $u_k \in H^1_0(\Omega_x)$, $v_k \in H^1_0(\Omega_y)$, $\|u_k \otimes v_k\|=1$ and  $\sum_{k \ge 0} |c_k| < \infty$} \right\},$$
and we define the ${\mathcal L}^1$-norm as
$$\|g\|_{{\mathcal L}^1}= \inf \left\{ \sum_{k \ge 0} |c_k|, g=\sum_{k \ge 0}
c_k u_k \otimes v_k, \text{ where $\|u_k \otimes v_k\|=1$} \right\},$$
for $g \in {\mathcal L}^1$.
\end{defi}
The following properties may readily be established:
\begin{itemize}
\item The space  ${\mathcal L}^1$ is a Banach space.
\item The space  ${\mathcal L}^1$ is continuously embedded in $H^1_0(\Omega)$.
\end{itemize}
Notice that, in the definition of ${\mathcal L}^1$, the function $g=\sum_{k \ge 0} c_k u_k \otimes v_k$ is indeed well defined in $H^1_0(\Omega)$ as a normally convergent series. This also shows that ${\mathcal L}^1 \subset H^1_0(\Omega)$, and this imbedding is continuous by the triangle inequality $\|\sum_{k \ge 0} c_k u_k \otimes v_k\| \le  \sum_{k \ge 0} |c_k|$.
%%%%%%%%%%%%%%%%%%%%%%%%%%%%%%%%%%%%%%%%%%%%%%%%
% Indeed, we have
% \begin{align*}
% \|\sum_{k \ge 0} c_k u_k \otimes v_k\|
% & \le \sum_{k \ge 0} |c_k| \|u_k \otimes v_k\|\\
% &  \sum_{k \ge 0} |c_k|
% \end{align*}

% Pour Banach, cf. la preuve de Raphael Roux: L1_Banach.tex

% On pense que  ${\mathcal L}^1$ n'est pas ferm=E9 dans $H^1_0$. En effet, si on pense au cas o=F9 le dictionnaire
% est constitu=E9 seulement de la base orthonorm=E9e de $L^2$, la d=E9composition sous forme d'une s=E9rie de
% produit tensoriel est unique, et il suffit de consid=E9rer alors la suite de ${\mathcal L}^1$ constitu=E9
% des sommes partielles d'une s=E9rie avec les $c_k$ qui d=E9finissent une s=E9rie convergente, mais pas
% normalement convergente.

% On pense que l'espace construit pas Devore Temlyakov est potentiellement plus gros que celui que
% j'ai d=E9fini. Comme ils sont int=E9ress=E9s par prouv=E9 la convergence pour "le plus de fonctions possibles"
% c'est normal qu'ils aient cherch=E9 E0 le construire le plus gros possible.
%%%%%%%%%%%%%%%%%%%%%%%%%%%%%%%%%%%%%%%%%%%%%%%%

We do not know if there exists a simple characterization of functions in
${\mathcal L}^1$. Let us however give simple examples of such functions.
\begin{lem}
For any $m>2$, $H^m(\Omega) \cap H^1_0(\Omega) \subset {\mathcal L}^1$.
\end{lem}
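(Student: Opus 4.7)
My plan is to produce an explicit tensor-product expansion of $g$ using a Fourier decomposition in the $y$-variable. Let $\{\psi_j\}_{j\ge 1}$ be the $L^2(\Omega_y)$-orthonormal eigenbasis of $-\partial_y^2$ on $H^1_0(\Omega_y)$, with eigenvalues $0<\mu_1\le\mu_2\le\ldots$; by Weyl's law $\mu_j\sim cj^2$, hence $\sum_j \mu_j^{-1}<\infty$. Given $g\in H^m(\Omega)\cap H^1_0(\Omega)$ with $m>2$ (so in particular $g\in H^2(\Omega)$), I would set $g_j(x)=\int_{\Omega_y}g(x,y)\,\psi_j(y)\,dy$, so that $g=\sum_j g_j\otimes\psi_j$ holds in $L^2(\Omega)$; a Fubini-type argument combined with $g\in H^1_0(\Omega)$ shows $g_j\in H^1_0(\Omega_x)$ for each $j$. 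The strategy is to prove that this series converges absolutely in $H^1_0(\Omega)$, which by the definition of the $\mathcal{L}^1$-norm as an infimum over normalized tensor representations immediately forces $g\in\mathcal{L}^1$.

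The estimate proceeds from $\|g_j\otimes\psi_j\|^2=\|g_j'\|_{L^2}^2+\mu_j\|g_j\|_{L^2}^2$. Combining the elementary inequality $\sqrt{a^2+b^2}\le a+b$ with Cauchy--Schwarz applied with weights $\mu_j^{-1}$ yields
$$
\sum_j\|g_j\otimes\psi_j\|
\le \Big(\sum_j\mu_j^{-1}\Big)^{1/2}\Big[\Big(\sum_j\mu_j\|g_j'\|_{L^2}^2\Big)^{1/2}+\Big(\sum_j\mu_j^2\|g_j\|_{L^2}^2\Big)^{1/2}\Big].
$$
The leading factor is finite by Weyl, so the problem reduces to controlling the two inner sums by $\|g\|_{H^m(\Omega)}$. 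The key identifications, valid pointwise in $x\in\Omega_x$ by Parseval in the basis $\{\psi_j\}$, are
$$
\sum_j\mu_j\,g_j'(x)^2=\|\partial_y\partial_x g(x,\cdot)\|_{L^2(\Omega_y)}^2,
\qquad
\sum_j\mu_j^2\,g_j(x)^2=\|\partial_y^2 g(x,\cdot)\|_{L^2(\Omega_y)}^2;
$$
after integration in $x$ and Fubini, the two inner sums become $\|\partial_x\partial_y g\|_{L^2(\Omega)}^2$ and $\|\partial_y^2 g\|_{L^2(\Omega)}^2$, each controlled by $\|g\|_{H^2(\Omega)}^2\le\|g\|_{H^m(\Omega)}^2$.

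The main obstacle is the justification of these Parseval identities, which demand that both $g(x,\cdot)$ \emph{and} $\partial_x g(x,\cdot)$ lie in $H^1_0(\Omega_y)$ for a.e.\ $x$. The first membership is immediate from $g\in H^1_0(\Omega)$. For the second, my plan is to observe that the trace operator on the horizontal sides $\{y=0\}$ and $\{y=1\}$ commutes with the tangential derivative $\partial_x$, so that the trace of $\partial_x g$ on those sides equals $\partial_x 0=0$; this is precisely where the assumption $g\in H^m(\Omega)$ with $m>2$, hence $g\in H^2(\Omega)$, is used, via $\partial_x g\in H^1(\Omega)$ having a well-defined trace. Once these boundary conditions are in place, the Parseval identities apply, $\sum_j\|g_j\otimes\psi_j\|<\infty$, the series converges absolutely in $H^1_0(\Omega)$ to an element of $\mathcal{L}^1$, and it must coincide with $g$ since the two already agree in $L^2(\Omega)$.
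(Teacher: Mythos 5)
Your proof is correct, but it takes a genuinely different and somewhat sharper route than the paper's. The paper expands $g$ in the full two-dimensional sine basis $\phi_k\otimes\phi_l$, estimates $\|g\|_{\mathcal L^1}\le\pi\sum_{k,l}|g_{k,l}|\sqrt{k^2+l^2}$, and applies a weighted Cauchy--Schwarz inequality with weights $(k^2+l^2)^{(1-m)/2}$; the two-dimensional sum $\sum_{k,l}(k^2+l^2)^{1-m}$ converges only for $m>2$, which is exactly where that hypothesis enters. You instead expand only in the $y$-variable, $g=\sum_j g_j\otimes\psi_j$, so that the weighted Cauchy--Schwarz takes place over a one-dimensional index and the relevant weight sum is $\sum_j\mu_j^{-1}\sim\sum_j j^{-2}$, which is unconditionally finite. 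The price is that you need Parseval identities in the $\{\psi_j\}$-basis for $\partial_y^2 g(x,\cdot)$ and $\partial_y\partial_x g(x,\cdot)$, and the second of these requires the trace argument you give for $\partial_x g$ vanishing on $\{y=0,1\}$; this is indeed the delicate point, but your argument (tangential derivative of a trace, using $g\in H^2(\Omega)$ so $\partial_x g\in H^1(\Omega)$ has a trace) is valid on a rectangle. One worthwhile observation: your proof never actually uses $m>2$ beyond $g\in H^2(\Omega)$, so it proves the stronger inclusion $H^2(\Omega)\cap H^1_0(\Omega)\subset\mathcal L^1$ in dimension two, whereas the paper's symmetric tensor expansion cannot reach $m=2$. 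The trade-off is that the paper's symmetric approach generalizes immediately to dimension $N$ (giving $m>1+N/2$, as in Remark~\ref{rem:L1}), while your semi-discrete expansion would leave $g_j$ a function of $N-1$ variables rather than a pure tensor product and so does not extend directly.
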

\begin{proof}
Without loss of generality, consider the case $\Omega_x=\Omega_y=(0,1)$. Using the fact that $\left\lbrace \phi_k \otimes \phi_l, k,l \ge 1 \right \rbrace$, where $\phi_k(x) =  \sqrt{2} \sin ( k \pi x)$, is an orthonormal basis of $L^2(\Omega)$, we can write any function $g \in L^2(\Omega)$ as the series $g=\sum_{k,l \ge 1} g_{k,l} \phi_k \otimes 
\phi_l$, where $g_{k,l} = \int_{\Omega} g \, \phi_k \otimes \phi_l$. It is well known that $$g \in H^1_0(\Omega) \iff \sum_{k,l \ge 1} |g_{k,l}|^2 (k^2+l^2) < \infty$$ and, more generally, for any $m \ge 1$,
$$g \in H^m(\Omega) \cap H^1_0(\Omega) \iff \sum_{k,l \ge 1} |g_{k,l}|^2 
(k^2+l^2)^m < \infty.$$
On the other hand,
\begin{align*}
\|g\|_{\mathcal L^1}
&=\bigg\|\sum_{k,l \ge 1} g_{k,l} \phi_k \otimes \phi_l\bigg\|_{\mathcal L^1} \\
&=\bigg\|\sum_{k,l \ge 1} g_{k,l} \|  \phi_k \otimes \phi_l\| \frac{\phi_k \otimes \phi_l}{\|  \phi_k \otimes \phi_l\|}\bigg\|_{\mathcal L^1} \\
&\le \sum_{k,l \ge 1} |g_{k,l}| \pi \sqrt{k^2 + l^2},
\end{align*}
since $\|  \phi_k \otimes \phi_l\|=  \pi \sqrt{k^2 + l^2}$.
Thus, by the H\"older inequality, we have, for any $m > 2$, if $g \in H^m(\Omega) \cap H^1_0(\Omega)$,
\begin{align*}
\|g\|_{\mathcal L^1}
&\le \pi \sum_{k,l \ge 1} |g_{k,l}| (k^2 + l^2)^{m/2} (k^2 + l^2)^{(1-m)/2}\\
&\le \pi \left( \sum_{k,l \ge 1} |g_{k,l}|^2 (k^2+l^2)^m \right)^{1/2} \left( \sum_{k,l \ge 1} (k^2+l^2)^{1-m} \right)^{1/2}\\
& < \infty,
\end{align*}
since $\sum_{k,l \ge 1} (k^2+l^2)^{1-m}< \infty$ as soon as $m>2$.
\end{proof}
\begin{rem}\label{rem:L1}
More generally, in dimension $N \ge 2$, the same proof shows that: for any $m>1+N/2$, $H^m(\Omega) \cap H^1_0(\Omega) \subset {\mathcal L}^1$.
\end{rem}

Let us now give the rate of convergence of the Pure Greedy
Algorithm. For the details of the proof, we again refer to~\cite{devore-temlyakov-96}. The proof is based on the fundamental lemma:
\begin{lem}[{\cite[Lemma 3.5]{devore-temlyakov-96}}]\label{lem:DVT}
Let us assume that $g \in {\mathcal L}^1$. Then, for any $n \ge 0$, $g_n
\in {\mathcal L}^1$ and we have:
$$\|r_{n+1} \otimes s_{n+1}\| = \frac{\langle g_{n} , r_{n+1} \otimes s_{n+1} \rangle}{\| r_{n+1}
\otimes s_{n+1} \|} \ge \frac{\|g_n\|^2}{\|g_n\|_{{\mathcal L}^1}}.$$
\end{lem}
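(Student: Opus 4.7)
The plan is to break the proof into three steps: verify that $g_n \in \mathcal{L}^1$, identify the equality as a rewriting of~\eqref{eq:ortho2}, and derive the lower bound by testing the squared norm $\|g_n\|^2$ against an almost-optimal $\mathcal{L}^1$ atomic decomposition of $g_n$ and controlling each rank-one inner product via Lemma~\ref{lem:ProdScal}.

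First, since $g \in \mathcal{L}^1$ admits a representation $g = \sum_{k \ge 0} c_k u_k \otimes v_k$ with $\|u_k \otimes v_k\| = 1$ and $\sum_k |c_k| < \infty$, and since $g_n = g - \sum_{k=1}^n r_k \otimes s_k$, one may write $g_n$ explicitly as an absolutely convergent series of unit-norm rank-one tensors (normalising each nonzero $r_k \otimes s_k$ and absorbing its norm into the scalar coefficient). Hence $g_n \in \mathcal{L}^1$, with the crude estimate $\|g_n\|_{\mathcal{L}^1} \le \|g\|_{\mathcal{L}^1} + \sum_{k=1}^n \|r_k \otimes s_k\|$. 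The equality in the statement is just~\eqref{eq:ortho2} read at index $n+1$ (using $g_{(n+1)-1} = g_n$).

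The crux is the inequality. Fix $\varepsilon > 0$ and pick an $\mathcal{L}^1$ representation $g_n = \sum_k c_k u_k \otimes v_k$ with $\|u_k \otimes v_k\| = 1$ and $\sum_k |c_k| \le \|g_n\|_{\mathcal{L}^1} + \varepsilon$. By bilinearity and continuity of the scalar product $\langle \cdot, \cdot \rangle$ on $H^1_0(\Omega)$,
\[
\|g_n\|^2 = \langle g_n, g_n \rangle = \sum_k c_k \, \langle u_k \otimes v_k, g_n \rangle.
\]
Now Lemma~\ref{lem:ProdScal}, applied with index $n+1$ (so the $g_{n-1}$ in that lemma is the present $g_n$), asserts that for every unit-norm rank-one tensor $u \otimes v$,
\[
\langle u \otimes v, g_n \rangle \le \|r_{n+1} \otimes s_{n+1}\|.
\]
Replacing $u$ by $-u$ (which preserves both the unit norm and the class of rank-one tensors) one gets the absolute value bound $|\langle u \otimes v, g_n \rangle| \le \|r_{n+1} \otimes s_{n+1}\|$. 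Plugging this into the sum yields
\[
\|g_n\|^2 \le \|r_{n+1} \otimes s_{n+1}\| \sum_k |c_k| \le \|r_{n+1} \otimes s_{n+1}\|\,(\|g_n\|_{\mathcal{L}^1} + \varepsilon),
\]
and sending $\varepsilon \to 0$ delivers the claimed inequality.

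No step is really an obstacle: the argument is just the standard duality between the greedy selection rule (which by Lemma~\ref{lem:ProdScal} \emph{maximises} the rank-one inner product with the current residual) and the $\mathcal{L}^1$ atomic norm (which decomposes the residual into a superposition of rank-one atoms). The only mild point of vigilance is the symmetrisation $u \mapsto -u$ needed to convert the signed bound of Lemma~\ref{lem:ProdScal} into an absolute-value bound, which is legitimate because the minimisation problem~\eqref{eq:LRA_var} and the normalisation $\|u \otimes v\| = 1$ are both invariant under that sign change.
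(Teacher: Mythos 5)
Your proof is correct. The paper does not actually prove Lemma~\ref{lem:DVT} (it refers to \cite{devore-temlyakov-96}), but your argument reconstructs exactly the intended proof and mirrors, step for step, the paper's own proof of the companion Lemma~\ref{lem:DVTo}: establish $g_n \in {\mathcal L}^1$, read off the equality from~\eqref{eq:ortho2}, pick an $\varepsilon$-optimal atomic representation of the residual, expand $\|g_n\|^2$ against it, and bound each atom's inner product by $\|r_{n+1}\otimes s_{n+1}\|$ via Lemma~\ref{lem:ProdScal} (with the $u\mapsto -u$ symmetrisation you correctly flag, which the paper also uses implicitly in the Lemma~\ref{lem:DVTo} proof when passing to $\sum |c_k|$). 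The only structural difference from the paper's OGA proof is forced by the statement itself: there, Galerkin orthogonality gives $\|g_n^o\|^2 = \langle g_n^o, g\rangle$, so it suffices to decompose $g$ and obtain $\|g\|_{{\mathcal L}^1}$ in the denominator; for the Pure Greedy Algorithm there is no such identity, so you decompose $g_n$ directly, which is precisely why $\|g_n\|_{{\mathcal L}^1}$ appears instead. Your version is the correct adaptation.
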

%%%%%%%%%%%%%%%%%%%%%%%%%%%%%%%%%%%%%%%%%%%%%%%%
% \begin{proof}
% Since $g_n=g - \sum_{k=1}^{n} r_k \otimes s_k$, it is clear that $g_n
% \in {\mathcal L}^1$. The equality $\|r_{n+1} \otimes s_{n+1}\| 
% \frac{\langle g_{n} , r_{n+1} \otimes s_{n+1} \rangle}{\| r_{n+1} \otimes s_{n+1} \|}$ is a consequence of Lemma~\ref{lem:ProdScal}.

% Now, we can suppose that $n=0$ without loss of generality.
%  Since $g \in {\mathcal L}^1$, for any $\varepsilon >0$, we can write $g=\sum_{k \ge 0} c_k u_k
% \otimes v_k$ with $\|u_k \otimes v_k\|=1$, and $\sum_{k \ge 0} |c_k| \le
% \|g\|_{{\mathcal L}^1} + \varepsilon$. Then we have, using Lemma~\ref{lem:ProdScal}:
% \begin{align*}
% \|g\|^2
% &= \left\langle g , \sum_{k \ge 0} c_k u_k \otimes v_k \right\rangle, \\
% &= \sum_{k \ge 0} c_k \langle g ,u_k \otimes v_k \rangle, \\
% & \le \sum_{k \ge 0} |c_k| \frac{\langle g ,r_1 \otimes s_1 \rangle}{\| r_1
% \otimes s_1 \|}, \\
% & \le (\|g\|_{{\mathcal L}^1} + \varepsilon) \frac{\langle g ,r_1 \otimes s_1 \rangle}{\| r_1
% \otimes s_1 \|},
% \end{align*}
% from which we deduce the result by letting $\varepsilon \to 0$.
% \end{proof}
%%%%%%%%%%%%%%%%%%%%%%%%%%%%%%%%%%%%%%%%%%%%%%%%

The following technical result (easily obtained by induction) is also needed.
\begin{lem}[{\cite[Lemma 3.4]{devore-temlyakov-96}}]\label{lem:suite}
Let $(a_n)_{n \ge 1}$ be a sequence of non-negative real numbers and $A$ 
a positive real number such that
$a_1 \le A$ and $a_{n+1} \le a_n \left(1- \frac{a_n}{A}\right)$. Then,
$\forall n \ge 1$,
$$a_n \le \frac{A}{n}.$$
\end{lem}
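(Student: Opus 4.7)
The plan is to proceed by induction on $n$, with the base case $a_1 \le A = A/1$ being the hypothesis. The key analytic observation I would use is that the map $\phi \colon [0,A]\to \R$ defined by $\phi(x) = x\bigl(1 - x/A\bigr)$ is nonnegative, is monotone increasing on $[0,A/2]$, and attains its maximum value $A/4$ at $x = A/2$. In particular $\phi(x)\le A/4$ for every $x\in[0,A]$.

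First I would handle the step from $n=1$ to $n=2$ separately, since the inductive bound $a_1\le A$ does not a priori place $a_1$ in the increasing region of $\phi$. By the above remark, however,
\[
a_2 \;\le\; \phi(a_1)\;\le\; \frac{A}{4}\;\le\;\frac{A}{2},
\]
which establishes both $a_2\le A/2$ and $a_2\le A/2$ (the bound needed to continue the induction in the increasing regime of $\phi$).

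Next, for the inductive step with $n\ge 2$, I assume $a_n\le A/n$; since $n\ge 2$ this gives $a_n\le A/2$, so $a_n$ lies in the interval where $\phi$ is increasing. Hence
\[
a_{n+1}\;\le\;\phi(a_n)\;\le\;\phi\!\left(\frac{A}{n}\right)\;=\;\frac{A}{n}\left(1-\frac{1}{n}\right)\;=\;\frac{A(n-1)}{n^{2}}.
\]
It then suffices to check $A(n-1)/n^{2} \le A/(n+1)$, which is equivalent to $(n-1)(n+1)\le n^{2}$, i.e.\ $n^{2}-1\le n^{2}$, trivially true. This closes the induction and yields $a_n\le A/n$ for every $n\ge 1$.

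There is no real obstacle in this argument; the only subtle point is the placement of $a_1$, which may lie in the decreasing region of $\phi$ and thus forces the separate treatment of the first step. Once $a_2\le A/2$ is established, the monotonicity of $\phi$ on $[0,A/2]$ drives the induction mechanically.
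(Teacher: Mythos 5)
Your proof is correct. The paper does not spell out an argument, remarking only that the lemma is ``easily obtained by induction''; your inductive argument—treating the first step separately via $a_2 \le \phi(a_1)\le A/4\le A/2$, then using monotonicity of $\phi(x)=x(1-x/A)$ on $[0,A/2]$ together with $(n-1)(n+1)\le n^2$—is precisely the standard way to carry out that induction.
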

%%%%%%%%%%%%%%%%%%%%%%%%%%%%%%%%%%%%%%%%%%%%%%%%
% \begin{proof}
% The proof is by induction. Let us suppose that $a_n \le \frac{A}{n}$. If
% $a_{n+1}=0$, then, the assertion $a_{n+1} \le \frac{A}{n+1}$ is true. If
% $a_{n+1} \neq 0$, we have:
% \begin{align*}
% \frac{1}{a_{n+1}}
% & \ge \frac{1}{a_n} \frac{1}{1- \frac{a_n}{A}}\\
% & \ge  \frac{1}{a_n} \left( 1+ \frac{a_n}{A} \right)\\
% & \ge  \frac{1}{A} (n +1).
% \end{align*}
% \end{proof}
%%%%%%%%%%%%%%%%%%%%%%%%%%%%%%%%%%%%%%%%%%%%%%%%

Using Lemma~\ref{lem:DVT} and Lemma~\ref{lem:suite}, it is possible to show:
\begin{theo}[{\cite[Theorem 3.6]{devore-temlyakov-96}}]
For $g \in {\mathcal L}^1$, we have
\begin{equation}\label{eq:RC}
\| g_n \| \le \|g\|^{2/3} \|g\|_{{\mathcal L}^1}^{1/3} n^{-1/6}.
\end{equation}
\end{theo}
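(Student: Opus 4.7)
The plan is to combine Lemma~\ref{lem:DVT} with the Pythagorean identity~\eqref{eq:pyth} to derive a recurrence on the squared error $a_n := \|g_n\|^2$, and then to feed this recurrence into Lemma~\ref{lem:suite}. From~\eqref{eq:pyth} one has $a_{n+1} = a_n - \|r_{n+1} \otimes s_{n+1}\|^2$, and Lemma~\ref{lem:DVT} supplies the lower bound $\|r_{n+1} \otimes s_{n+1}\|^2 \ge a_n^2/\|g_n\|_{{\mathcal L}^1}^2$. Together these give
$$a_{n+1} \le a_n \left(1 - \frac{a_n}{\|g_n\|_{{\mathcal L}^1}^2}\right),$$
which has exactly the form required by Lemma~\ref{lem:suite}, except that the denominator $\|g_n\|_{{\mathcal L}^1}^2$ depends on $n$.

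To control $\|g_n\|_{{\mathcal L}^1}$, I would use the identity $g_n = g - \sum_{k=1}^n r_k \otimes s_k$ together with the triangle inequality for the ${\mathcal L}^1$-norm and the trivial bound $\|r_k \otimes s_k\|_{{\mathcal L}^1} \le \|r_k \otimes s_k\|$ (take the one-term decomposition). This yields
$$\|g_n\|_{{\mathcal L}^1} \le \|g\|_{{\mathcal L}^1} + \sum_{k=1}^n \|r_k \otimes s_k\|,$$
and Cauchy--Schwarz combined with the summability $\sum_k \|r_k \otimes s_k\|^2 \le \|g\|^2$ established in Theorem~\ref{theo:CV_PGA} gives $\sum_{k=1}^n \|r_k \otimes s_k\| \le \sqrt{n}\,\|g\|$, hence a polynomial growth bound on $\|g_n\|_{{\mathcal L}^1}$.

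The main obstacle, and the reason the exponent degrades from the naive $n^{-1/2}$ one would get if $\|g_n\|_{{\mathcal L}^1}$ were uniformly bounded to the stated $n^{-1/6}$, lies in carefully absorbing the above growth into the recursion. Following~\cite{devore-temlyakov-96}, I would establish by induction the polynomial decay $a_n \le \|g\|^{4/3}\|g\|_{{\mathcal L}^1}^{2/3}\, n^{-1/3}$: the base case is immediate since $a_1 \le \|g\|^2$, and for the induction step one plugs the upper bound on $\|g_n\|_{{\mathcal L}^1}^2$ into the recursion and argues as in Lemma~\ref{lem:suite}, but with the scale $A$ replaced by an $n$-dependent quantity chosen so that the multiplicative defect $\bigl(1-a_n/\|g_n\|_{{\mathcal L}^1}^2\bigr)$ is compatible with the transition from $n^{-1/3}$ to $(n+1)^{-1/3}$. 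Taking square roots in $a_n \le \|g\|^{4/3}\|g\|_{{\mathcal L}^1}^{2/3}\, n^{-1/3}$ then produces the stated estimate~\eqref{eq:RC}. The delicate point is the matching of constants in the induction step; everything else is a repackaging of Lemmas~\ref{lem:DVT} and~\ref{lem:suite}.
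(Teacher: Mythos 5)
The paper does not actually give a proof of this theorem: it only states the two ingredients (Lemma~\ref{lem:DVT} and Lemma~\ref{lem:suite}) and defers the argument to DeVore--Temlyakov, so your sketch must stand on its own. The first half is fine: the Pythagorean identity~\eqref{eq:pyth} together with Lemma~\ref{lem:DVT} gives $a_{n+1} \le a_n(1 - a_n/B_n^2)$ with $a_n = \|g_n\|^2$, $B_n = \|g_n\|_{\mathcal L^1}$, $\sigma_n=\|r_n\otimes s_n\|$, and the triangle inequality does give $B_n \le B_0 + \sum_{k\le n}\sigma_k$. But the Cauchy--Schwarz step $B_n \lesssim \sqrt{n}$ is fatally lossy, and the ``delicate point'' you flag is exactly where the argument fails, not where it merely needs polishing. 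Try to close your proposed induction $a_n \le M^2 n^{-\alpha}$ with $B_n^2 \le Cn$: passing from step $n$ to $n+1$ requires the decrement $a_n^2/B_n^2$ to be at least $M^2\bigl(n^{-\alpha}-(n+1)^{-\alpha}\bigr) \approx \alpha M^2 n^{-\alpha-1}$ in the worst case $a_n = M^2 n^{-\alpha}$, whereas $B_n^2\le Cn$ only guarantees $a_n^2/B_n^2 \ge M^4 n^{-2\alpha-1}/C$. Comparing forces $n^{\alpha} \lesssim M^2/(\alpha C)$, which fails for all large $n$ as soon as $\alpha>0$. So with only $B_n^2 = O(n)$ one cannot extract any polynomial rate, let alone $n^{-1/3}$.

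The missing idea is to use Lemma~\ref{lem:DVT} once more to control $B_n$ in terms of $a_n$ rather than $n$. Since $\sigma_n B_{n-1} \ge a_{n-1}$, one has
\begin{equation*}
a_n B_n \le (a_{n-1}-\sigma_n^2)(B_{n-1}+\sigma_n) = a_{n-1}B_{n-1} + \sigma_n\bigl(a_{n-1}-\sigma_n B_{n-1}\bigr) - \sigma_n^3 \le a_{n-1}B_{n-1},
\end{equation*}
so the product $a_nB_n$ is non-increasing, giving $B_n \le \|g\|^2\|g\|_{\mathcal L^1}/a_n$, i.e.\ $B_n^2 \le \|g\|^4\|g\|_{\mathcal L^1}^2/a_n^2$. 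Substituting this into the recursion yields $a_{n+1} \le a_n\bigl(1 - a_n^3/(\|g\|^4\|g\|_{\mathcal L^1}^2)\bigr)$; cubing and using $(1-x)^3 \le 1-x$ on $[0,1]$ gives $b_{n+1} \le b_n(1-b_n/A)$ with $b_n=a_n^3$ and $A=\|g\|^4\|g\|_{\mathcal L^1}^2$, and since $b_1 \le \|g\|^6 \le A$ (because $\|g\|\le\|g\|_{\mathcal L^1}$) Lemma~\ref{lem:suite} applies directly, giving $a_n^3 \le \|g\|^4\|g\|_{\mathcal L^1}^2\,n^{-1}$, i.e.\ the claim. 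The $-1/6$ exponent thus comes from cubing the recursion after the $a_nB_n$ monotonicity step, not from absorbing a $\sqrt{n}$ growth of the $\mathcal L^1$-norm.
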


A better rate of convergence can be proven for the Orthogonal Greedy
Algorithm. For the Orthogonal Greedy Algorithm, the following Lemma  plays 
the role of Lemma~\ref{lem:DVT}.
\begin{lem}\label{lem:DVTo}
Assume that $g \in {\mathcal L}^1$. Then, for any $n \ge 0$, $g_n^o
\in {\mathcal L}^1$ and we have:
$$\|r_{n+1}^o \otimes s_{n+1}^o\| = \frac{\langle g_{n}^o , r_{n+1}^o \otimes s_{n+1}^o \rangle}{\| r_{n+1}^o
\otimes s_{n+1}^o \|} \ge \frac{\|g_n^o\|^2}{\|g\|_{{\mathcal L}^1}}.$$
\end{lem}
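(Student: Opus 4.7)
The plan is to adapt the proof of Lemma~\ref{lem:DVT} to the Orthogonal Greedy Algorithm, with the key improvement that the norm $\|g_n^o\|_{\mathcal{L}^1}$ which would appear in a PGA-style bound is replaced by the stronger $\|g\|_{\mathcal{L}^1}$. This refinement is made possible by the Galerkin orthogonality that distinguishes OGA from PGA.

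First I would observe that the Galerkin step~\eqref{eq:LRA_galo} amounts to minimizing $\frac{1}{2}\|v\|^2 - \langle g,v\rangle$ over $v\in V_n := \operatorname{span}(r_1^o\otimes s_1^o,\ldots,r_n^o\otimes s_n^o)$, after writing $\int_\Omega fv = \langle g,v\rangle$ through integration by parts. Its minimizer $\sum_{k=1}^n \alpha_k\,r_k^o\otimes s_k^o$ is therefore the $\langle\cdot,\cdot\rangle$-orthogonal projection of $g$ onto $V_n$, so $g_n^o = g - P_{V_n}g$ is $\langle\cdot,\cdot\rangle$-orthogonal to $V_n$. In particular,
\[
\|g_n^o\|^2 = \langle g_n^o,g_n^o\rangle = \langle g_n^o, g\rangle. \qquad (\dagger)
\]
The fact that $g_n^o\in\mathcal{L}^1$ is then immediate, since $g\in\mathcal{L}^1$ by assumption and a finite linear combination of rank-one tensors trivially belongs to $\mathcal{L}^1$.

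Next, the equality part of the statement, $\|r_{n+1}^o\otimes s_{n+1}^o\| = \frac{\langle g_n^o, r_{n+1}^o\otimes s_{n+1}^o\rangle}{\|r_{n+1}^o\otimes s_{n+1}^o\|}$, is the OGA counterpart of~\eqref{eq:ortho2}, obtained by testing the Euler-Lagrange equations associated to~\eqref{eq:LRA_varo} against $(r_{n+1}^o,0)$. Transcribing Lemma~\ref{lem:ProdScal} verbatim to OGA (the proof goes through unchanged, with $g_{n-1}$ replaced by $g_n^o$) then gives the variational characterization
\[
\|r_{n+1}^o\otimes s_{n+1}^o\| \;\ge\; \frac{|\langle u\otimes v, g_n^o\rangle|}{\|u\otimes v\|} \quad \text{for every nonzero } u\otimes v,
\]
the absolute value being recovered by flipping the sign of $u$.

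For the main estimate, fix $\varepsilon>0$ and select a decomposition $g = \sum_{k\ge 0} c_k\, u_k\otimes v_k$ with $\|u_k\otimes v_k\|=1$ and $\sum_{k\ge 0}|c_k|\le \|g\|_{\mathcal{L}^1}+\varepsilon$. Combining $(\dagger)$ with the previous inequality yields
\[
\|g_n^o\|^2 = \langle g_n^o,g\rangle = \sum_{k\ge 0} c_k\, \langle g_n^o, u_k\otimes v_k\rangle \;\le\; \|r_{n+1}^o\otimes s_{n+1}^o\|\sum_{k\ge 0}|c_k| \;\le\; (\|g\|_{\mathcal{L}^1}+\varepsilon)\,\|r_{n+1}^o\otimes s_{n+1}^o\|,
\]
and letting $\varepsilon\to 0$ concludes the proof. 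The step that really deserves scrutiny is identity $(\dagger)$: it is the Galerkin orthogonality that allows one to substitute $g$ for $g_n^o$ on the right-hand side, thereby replacing $\|g_n^o\|_{\mathcal{L}^1}$ by the $n$-independent quantity $\|g\|_{\mathcal{L}^1}$. This substitution — and not any delicate analytic obstruction — is precisely the substantive gain of OGA over PGA.
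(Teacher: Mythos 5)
Your proposal is correct and follows essentially the same route as the paper's proof: use the Galerkin orthogonality from~\eqref{eq:LRA_galo} to write $\|g_n^o\|^2 = \langle g_n^o, g\rangle$, expand $g$ in a near-optimal ${\mathcal L}^1$-decomposition, and bound each term using the optimality property of Lemma~\ref{lem:ProdScal} (transferred to the OGA as noted in Section~\ref{sec:EL}). You are in fact slightly more explicit than the paper on one small point --- that the absolute value in $|\langle g_n^o, u_k\otimes v_k\rangle|\le \|r_{n+1}^o\otimes s_{n+1}^o\|$ is recovered by flipping the sign of $u_k$ --- but this is a cosmetic refinement, not a different argument.
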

\begin{proof}
Since $g_n=g - \sum_{k=1}^{n} \alpha_k r_k \otimes s_k$, it is clear 
that $g_n
\in {\mathcal L}^1$. The equality $\|r_{n+1}^o \otimes s_{n+1}^o\| 
=\frac{\langle g_{n}^o , r_{n+1}^o \otimes s_{n+1}^o \rangle}{\|
  r_{n+1}^o \otimes s_{n+1}^o \|}$ is obtained as a consequence of
the Euler-Lagrange equations associated to the optimization problem on $(r_{n+1}^o,s_{n+1}^o)$ (see~\eqref{eq:ortho2}).

 Since $g \in {\mathcal L}^1$, for any $\varepsilon >0$, we can write $g=\sum_{k \ge 0} c_k u_k
\otimes v_k$ with $\|u_k \otimes v_k\|=1$, and $\sum_{k \ge 0} |c_k| \le
\|g\|_{{\mathcal L}^1} + \varepsilon$. By~\eqref{eq:LRA_galo}, we have
$\langle g- g_n^o , g_n^o \rangle=0$, and therefore, using Lemma~\ref{lem:ProdScal}:
\begin{align*}
\|g_n^o\|^2
&= \langle g_n^o , g \rangle \\
&= \left\langle g_n^o , \sum_{k \ge 0} c_k u_k \otimes v_k \right\rangle \\
&= \sum_{k \ge 0} c_k \langle g_n^o ,u_k \otimes v_k \rangle \\
& \le \sum_{k \ge 0} |c_k| \frac{\langle g_n^o ,r_{n+1}^o \otimes s_{n+1}^o \rangle}{\| r_{n+1}^o
\otimes s_{n+1}^o \|} \\
& = (\|g\|_{{\mathcal L}^1} + \varepsilon) \frac{\langle g_n^o ,r_{n+1}^o \otimes s_{n+1}^o \rangle}{\| r_{n+1}^o
\otimes s_{n+1}^o \|},
\end{align*}
from which we conclude letting $\varepsilon$ vanish.
\end{proof}

\begin{theo}[{\cite[Theorem 3.7]{devore-temlyakov-96}}]
For $g \in {\mathcal L}^1$, we have
\begin{equation}\label{eq:RCo}
\| g_n^o \| \le \|g\|_{{\mathcal L}^1} \, n^{-1/2}.
\end{equation}
\end{theo}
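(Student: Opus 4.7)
The plan is to combine the lower bound on the greedy increment supplied by Lemma~\ref{lem:DVTo} with the ``Pythagorean'' decay of $\|g_n^o\|^2$ already derived in the proof of Theorem~\ref{theo:CV_OGA}, and then feed the resulting scalar recursion into Lemma~\ref{lem:suite}. No new analytic input is needed beyond these ingredients; the argument is essentially bookkeeping.

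First I would recall that, as shown in the proof of Theorem~\ref{theo:CV_OGA}, the optimality of the Galerkin step~\eqref{eq:LRA_galo} yields
$$\|g_{n}^o\|^2 \le \|g_{n-1}^o\|^2 - \|r_n^o \otimes s_n^o\|^2.$$
Next, Lemma~\ref{lem:DVTo} gives the lower bound $\|r_n^o \otimes s_n^o\| \ge \|g_{n-1}^o\|^2 / \|g\|_{{\mathcal L}^1}$, hence
$$\|g_n^o\|^2 \le \|g_{n-1}^o\|^2 - \frac{\|g_{n-1}^o\|^4}{\|g\|_{{\mathcal L}^1}^2} = \|g_{n-1}^o\|^2 \left( 1 - \frac{\|g_{n-1}^o\|^2}{\|g\|_{{\mathcal L}^1}^2} \right).$$
Setting $a_n = \|g_n^o\|^2$ and $A = \|g\|_{{\mathcal L}^1}^2$, this is precisely the recursion $a_{n+1} \le a_n(1 - a_n/A)$ of Lemma~\ref{lem:suite}.

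It remains to check the initialization $a_1 \le A$ required by Lemma~\ref{lem:suite}. Since $g_0^o = g$, the continuous embedding ${\mathcal L}^1 \hookrightarrow H^1_0(\Omega)$ (which follows at once from the triangle inequality applied to any admissible decomposition $g = \sum_k c_k u_k \otimes v_k$) gives $\|g\| \le \|g\|_{{\mathcal L}^1}$, and the monotonicity $\|g_1^o\| \le \|g_0^o\|$ then yields $a_1 \le A$. Applying Lemma~\ref{lem:suite} we obtain $a_n \le A/n$, i.e. $\|g_n^o\|^2 \le \|g\|_{{\mathcal L}^1}^2 / n$, which is the desired bound~\eqref{eq:RCo}.

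There is no real obstacle in this proof: the two substantive inputs (the lower bound on the greedy step in ${\mathcal L}^1$ and the elementary recursion lemma) have both been prepared in advance, and the only subtlety is ensuring the correct indexing so that the initial condition of Lemma~\ref{lem:suite} is satisfied. The contrast with the Pure Greedy bound~\eqref{eq:RC} is that here the denominator in Lemma~\ref{lem:DVTo} is the fixed quantity $\|g\|_{{\mathcal L}^1}$ rather than the varying $\|g_n\|_{{\mathcal L}^1}$ (whose ${\mathcal L}^1$-norm cannot a priori be controlled), which is precisely what allows the sharper $n^{-1/2}$ rate.
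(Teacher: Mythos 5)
Your proof is correct and follows essentially the same route as the paper: combine the Pythagorean decay from the Galerkin step with the lower bound of Lemma~\ref{lem:DVTo}, and feed the resulting recursion into Lemma~\ref{lem:suite}. The only cosmetic difference is the choice of indexing (the paper takes $a_n = \|g_{n-1}^o\|^2$, which in fact yields the slightly sharper $\|g_n^o\|^2 \le \|g\|_{{\mathcal L}^1}^2/(n+1)$), and you explicitly check the initialization $a_1 \le A$, which the paper leaves implicit.
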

\begin{proof}
We have, using~\eqref{eq:ortho} and Lemma~\ref{lem:DVTo}:
% (as in the beginning of the proof of Theorem~\eqref{theo:CV_OGA}):
\begin{align*}
\|g_{n+1}^o \|^2
&=\bigg\|g -  \sum_{k=1}^{n+1} \alpha_k r_k^o \otimes s_k^o \bigg\|^2\\
& \le \|g_{n}^o - r_{n+1}^o \otimes s_{n+1}^o\|^2 \nonumber \\
&=\|g_{n}^o\|^2 - \|r_{n+1}^o \otimes s_{n+1}^o\|^2 \nonumber \\
&=\|g_{n}^o\|^2 \left( 1 - \frac{\|r_{n+1}^o \otimes s_{n+1}^o\|^2 }{\|g_{n}^o\|^2} \right) \\
&\le \|g_{n}^o\|^2 \left( 1 - \frac{\|g_n^o\|^2}{\|g\|_{{\mathcal L}^1}^2}
\right).
\end{align*}
The conclusion is reached applying Lemma~\ref{lem:suite} with
$a_{n}=\|g_{n-1}^o\|^2$ and $A=\|g\|_{{\mathcal L}^1}^2$.
\end{proof}

\begin{rem}
The rate of convergence of the Pure Greedy Algorithm in~\eqref{eq:RC} may be improved to $n^{-11/62}$~\cite{konyagin-temlyakov-99}. For both algorithms, it is known that there exists dictionaries and right-hand sides $f$ (even simple ones, like  a sum of only two elements of the dictionary) such that the rate of convergence $n^{-1/2}$ is attained (see~\cite{livshitz-temlyakov-03, devore-temlyakov-96,barron-cohen-dahmen-devore-08}). In that sense, the Orthogonal Greedy Algorithm realizes the
optimal rate of convergence. Notice that this rate of convergence does
not depend on the dimension of the problem. However, the assumption $g
\in {\mathcal L}^1$ seems to be more and more demanding, in terms of
regularity,  as the dimension increases (see Remark~\ref{rem:L1}).
\end{rem}

\section{Discussion and open problems}
\label{sec:discussion}
We begin this section by considering the case when  the Laplace operator is replaced by the identity
operator. We examine on this simplified case the discrepancy between the
variational
approach consisting in minimizing the energy and the non variational approach solving
the Euler-Lagrange equation.

\subsection{The Singular Value Decomposition case}\label{sec:SVD}
The algorithms we have presented above are closely related to the
Singular Value Decomposition (SVD, also called {\em rank one decomposition}). More precisely, omitting the
gradient in the optimization problem~\eqref{eq:LRA_var} yields: find $r_n \in L^2(\Omega_x)$ and $s_n \in L^2(\Omega_y)$ such that
\begin{equation}\label{eq:SVD_var}
(r_n,s_n) = \arg\min_{(r,s) \in  L^2(\Omega_x)\times L^2(\Omega_y)} \int_\Omega |g_{n-1} - r \otimes s|^2,
\end{equation}
with the recursion relation $$g_n=g_{n-1} - r_n \otimes s_n,$$ and
$g_0=g$.

In view of  the exact same arguments as in the previous sections, the
series $\sum_{n \ge 1} r_n \otimes s_n$ can be shown to converge to $g$
in $L^2(\Omega)$. This problem has a well-known companion discrete
problem, namely the SVD decomposition of a matrix (see for
example~\cite{trefethen-bau-97}). This corresponds to the case $\Omega_x=\{1, \ldots ,p\}$, $\Omega_y=\{1, \ldots ,q\}$, the integral $\int_\Omega$ is replaced by the discrete sum $\sum_{(i,j) \in {1, \ldots ,p}\times{1, \ldots ,q}}$, $G$ is a matrix in $\R^{p \times q}$ and $(R_n,S_n)$ are two (column) vectors in $\R^p \times \R^q$. In this case the tensor product $R_n \otimes S_n$ is simply the matrix $R_n (S_n)^T$. The matrices $G_n \in \R^{p \times q}$ are then defined by recursion: $G_0=G$ and $G_n=G_{n-1} - R_n (S_n)^T$.
%%%%%%%%%%%%%%%%%%%%%%%%%%%%%%%%%%%%%%%%%%%%%%%%
% http://en.wikipedia.org/wiki/Compact_operator
% http://en.wikipedia.org/wiki/Hilbert-Schmidt_integral_operator
% C'est reli=E9 au fait que l'op=E9rateur u \mapsto \int g(x,y) u(y) dy est compact si $g$ est L^2
% car E7a montre que cet op=E9rateur est limite d'op=E9rateurs de rang fini.
%%%%%%%%%%%%%%%%%%%%%%%%%%%%%%%%%%%%%%%%%%%%%%%%

\subsubsection{Orthogonality property}

An important property of the sequence $(r_n,s_n)$ generated by the algorithm in the SVD case is the orthogonality relation: if $n \neq m$
\begin{equation}\label{eq:SVD_ortho}
\int_{\Omega_x} r_n r_m= \int_{\Omega_y} s_n s_m=0.
\end{equation}
In order to check this, let us first write  the Euler-Lagrange equations in the SVD case (compare with~\eqref{eq:LRA_EL_FV}): for any functions $(r,s) \in L^2(\Omega_x)\times L^2(\Omega_y)$,
\begin{equation}\label{eq:SVD_EL_FV}
\int_\Omega r_n \otimes s_n (r_n \otimes s + r \otimes s_n) = \int_\Omega g_{n-1} (r_n \otimes s + r \otimes s_n).
\end{equation}
This also reads (compare with \eqref{eq:LRA_EL}):
\begin{equation}
  \left\{
      \begin{array}{l}
         \displaystyle\left(\int_{\Omega_y} |s_n|^2\right)\,  r_n = \int_{\Omega_y} 
g_{n-1}\, s_n,\\
 \\
\displaystyle\left(\int_{\Omega_x}
|r_n|^2 \right) \, s_n  =\int_{\Omega_x} g_{n-1}\, r_n.
      \end{array}
\right.
\end{equation}
It is immediate to see that \eqref{eq:SVD_EL_FV} for $n=1$ and $n=2$ 
implies,
$$\int_\Omega (r_2 \otimes s_2) (r_2 \otimes s_1) = \int_\Omega (r_2 \otimes s_2) (r_1 \otimes s_2) = 0.$$
Likewise,  it can be shown, for any $n \ge 2$ and any $l \in \{2, \ldots 
n \}$
\begin{equation}\label{eq:SVD_vrai_ortho}
\int_\Omega  \sum_{k=l}^n  (r_k \otimes s_k)
\, (r_n \otimes s_{l-1})=\int_\Omega \sum_{k=l}^n  (r_k \otimes s_k) \, (r_{l-1} \otimes s_n)=0.
\end{equation}
The orthogonality property~\eqref{eq:SVD_ortho} is then easy to check
using the Fubini Theorem and arguing by induction.

\begin{rem}
A simple consequence of the orthogonality of the functions obtained by
the algorithm is that, in the discrete version (SVD of a matrix $G \in
\R^{p \times q}$) the algorithm converges in a finite number of
iterations (namely $\max(p,q)$). As usual in this situation, practice
may significantly deviate from the above theory if round-off errors due
to floating-point computations are taken into account. This is
especially true if the matrix is ill conditioned.
\end{rem}

\subsubsection{Consequences of the orthogonality property}

The orthogonality property has several consequences:
%%%%%%%%%%%%%%%%%%%%%%%%%%%%%%%%%%%%%%%%%%%%%
Assume
the  SVD to be nondegenerate in the sense
\begin{equation}
  \label{eq:nondeg}
  g=\sum_{n \ge 1} \lambda_{n} \,u_n \otimes v_n,
\end{equation}
with
\begin{equation}
  \label{eq:nondeg2}
  \int_{\Omega_x} u_n u_m=\int_{\Omega_y} v_n v_m=\delta_{n,m},\forall n,m,\ \hbox{\rm and}\,
  \left(\lambda_n\right)_{n \ge 1}\hbox{\rm positive, strictly decreasing,}
\end{equation}
where $\delta_{n,m}$ is the Kronecker symbol.
Then
%%%%%%%%%%%%%%%%%%%%%%%%%%%%%%%%%%%%%%%%%%%%
\begin{itemize}
\item (i) The Pure Greedy Algorithm and the Orthogonal Greedy Algorithm
  are equivalent to one another in the SVD case.
\item (ii) The SVD decomposition $g=\sum_{n \ge 1} r_n \otimes s_n$ is 
unique.
\item (iii) At iteration $n$, $\sum_{k=1}^n r_k \otimes s_k$ is the
  minimizer of $\int_\Omega |g - \sum_{k=1}^n \phi_k \otimes \psi_k|^2$ over all possible $(\phi_k,\psi_k)_{1 \le k \le n} \in \left( L^2(\Omega_x)\times L^2(\Omega_y) \right)^n$.
\end{itemize}

In addition, simple arguments show that,
\begin{itemize}
\item (iv)  The only solutions to the Euler Lagrange
  equations~\eqref{eq:SVD_EL_FV} are the null solution $(0,0)$ and  the tensor products $\lambda_n u_n
  \otimes v_n$ (for all $n \ge 1$) in the SVD decomposition of $g$.
\item  (v) The solutions to the Euler-Lagrange equations which maximize the $L^2$-norm $\left(\int_{\Omega} |r \otimes s|^2\right)^{1/2}$ are
  exactly the solutions to the  variational
  problem~\eqref{eq:SVD_var}.
\item  (vi) In dimension $N=2$, the  solutions to the Euler-Lagrange equation that satisfy the
  second order optimality conditions are  exactly the solutions of the
  original variational problem~\eqref{eq:SVD_var}.
%%%%%%%%%%%%%%%%%%%%%%%%%%%%%%%%%%%%%%%%%%%%%%%%
%  This is generally wrong in the Poisson case.
%%%%%%%%%%%%%%%%%%%%%%%%%%%%%%%%%%%%%%%%%%%%%%%%
\end{itemize}
%%%%%%%%%%%%%%%%%%%%%%%%%%%%%%%%%%%%%%%%%%%%%
Notice that there is no loss of generality in assuming $\lambda_n >0$, and $(\lambda_n)_{n \ge 1}$ decreasing in~ (\ref{eq:nondeg}) (up to a change of the $(u_n,v_n)$). The fundamental assumption in nondegeneracy is thus that $\lambda_n \neq \lambda_m$ if $n \neq m$. When the  decomposition 
has some degeneracy ({\em i.e.} several $n$
correspond to the same $\lambda_n$ in (\ref{eq:nondeg})) then
properties (i)-(iii)-(v)-(vi) still hold true. On the other hand, in
(ii) the SVD
is only unique up to rotations within eigenspaces and property (iv)
must be modified accordingly. In short,  the only other solutions beyond 
those mentioned above consist
of tensor products of linear combinations of functions within a given eigenspace. We skip
such technicalities. The degenerate case indeed does not differ much
from the non degenerate case above in the sense that a complete
understanding of the algorithm, both in its variational and in its non
variational forms, is at hand.
%%%%%%%%%%%%%%%%%%%%%%%%%%%%%%%%%%%%%%%%%%%%%%

\medskip

Let us briefly outline the proofs of assertions (iv)-(v)-(vi).

\medskip

%%%%%%%%%%%%%%%%%%%%%%%%%%%%%%%%%%%%%%%%%%%%%%
% Assertion (ii) is a consequence of the Lemma:
% \begin{lem}
% Let us consider sequences $(\lambda^1_n,u^1_n,v^1_n)_{n \ge 1}$ and $(\lambda^2_n,u^2_n,v^2_n)_{n % \ge 1}$ such that, for $i=1,2$, $\int_{\Omega_x} |u^i_n|^2=\int_{\Omega_y} |v^i_n|^2=1$, the sets % of functions $(u^i_n)_{n \ge 1}$ and $(v^i_n)_{n \ge 1}$ are respectively linearly independent in % $L^2(\Omega_x)$ and $L^2(\Omega_y)$, $(\lambda^i_n)_{ n \ge 1}$ is a non increasing sequence of % positive numbers and
% $$\sum_{n \ge 1} \lambda^1_{n} \,u^1_n \otimes v^1_n  \sum_{n \ge 1} \lambda^2_{n} \,u^2_n % \otimes v^2_n.$$
% Then, $\lambda^1_n  \lambda^2_n$ et $u^1_n \otimes v^1_n  u^2_n \otimes v^2_n$.
% \end{lem}
% \begin{proof}
% The proof consists in decomposing $u^2_n$ (resp. $v^2_n$) on the (possibly completed) set of % linearly independent functions  $(u^1_n)_{n \ge 1}$ (resp. $(v^1_n)_{n \ge 1}$):
% $$u^2_n  \sum_{k \ge 1} \alpha_{k,n} u^1_k \text{ and }
% v^2_n  \sum_{l \ge 1} \beta_{k,n} v^1_k.$$
% We thus obtain
% $$u^2_n \otimes v^2_n  \sum_{k,l \ge 1} \alpha_{k,n}\beta_{k,n} u^1_k \otimes v^1_l.$$

% $$\sum_{n \ge 1} \lambda^1_{n} \,u^1_n \otimes v^1_n  \sum_{n,k,l \ge 1} \lambda^2_{n} % \alpha_{k,n} \beta_{l,n} \, u^1_k \otimes v^1_l.$$

% Notice now that the set of functions $(u^1_k \otimes v^1_l)_{k,l \ge 1}$ is linearly independent.
% % preuve par l'absurde, puis int=E9grer en $y$ contre $v^i_1$ par exemple.
% Thus, we get ...
% \end{proof}
%%%%%%%%%%%%%%%%%%%%%%%%%%%%%%%%%%%%%%%%%%%%%%

We first prove assertion (iv). It is sufficient to consider the first iteration of the algorithm. Using the SVD decomposition of $g$, the Euler-Lagrange equations write: for any functions $(r,s) \in L^2(\Omega_x)\times L^2(\Omega_y)$,
\begin{equation*}
\int_\Omega r_1 \otimes s_1 (r_1 \otimes s + r \otimes s_1) = \sum_{n \ge 1} \lambda_n \int_\Omega u_n \otimes v_n (r \otimes s_1 + r_1 \otimes 
s).
\end{equation*}
Using the orthogonality property, and successively $(r,s)=(0,v_n)$ and $(r,s)=(u_n,0)$  as test functions, we get
\begin{equation*}
\left\{
\begin{array}{l}
\displaystyle \int_{\Omega_x} |r_1|^2 \int_{\Omega_y} s_1 v_n = \lambda_n \int_{\Omega_x} r_1 u_n,\\
\\
\displaystyle \int_{\Omega_y} |s_1|^2 \int_{\Omega_x} r_1 u_n = \lambda_n \int_{\Omega_y} s_1 v_n,
\end{array}
\right.
\end{equation*}
which yields: $\forall n \ge 1$
$$\int_{\Omega_y} s_1 v_n \int_{\Omega_x} r_1 u_n \left(\int_{\Omega_x} |r_1|^2 \int_{\Omega_y} |s_1|^2 - (\lambda_n)^2 \right)=0.$$
Since for $n \neq m$, $\lambda_n \neq \lambda_m$, this shows that either $r_1 \otimes s_1 = 0$, or there exists a unique $n_0$ such that $\lambda_{n_0}  = \sqrt{ \int_{\Omega} |r_1 \otimes s_1|^2}$ and $\forall n \neq n_0$, $\int_{\Omega_y} s_1 v_n =  \int_{\Omega_x} r_1 u_n=0$ (because the product $\int_{\Omega_y} s_1 v_n  \int_{\Omega_x} r_1 u_n$ cancels 
and thus each of the term cancels because of the Euler Lagrange equations). Since by the Euler-Lagrange equations, $r_1$ (resp. $s_1$) can be decomposed on the set of orthogonal functions $(u_n,\, n \ge 1)$ (resp. $(v_n,\, n \ge 1)$), we get $r_1 \otimes s_1 = \lambda_{n_0} u_{n_0} \otimes v_{n_0}$, which concludes the proof of assertion (iv). Assertion (v) is 
readily obtained using (iv) and the orthogonality property. Notice that assertion (ii) is a consequence of assertions (iv)-(v). To prove assertion (vi), we recall that the second order optimality condition writes (see Lemma~\ref{lem:EL2}, adapted to the SVD case): $\forall (r,s) \in L^2(\Omega_x) \times L^2(\Omega_y)$,
\begin{equation}\label{eq:EL2_SVD}
\left(\int_\Omega ( r_n \otimes s_n - g_{n})  r \otimes s \right)^2 \le \int_\Omega | r
  \otimes s_n|^2\ \int_\Omega |r_n \otimes s|^2.
\end{equation}
It is again enough to consider the case $n=1$. Let us consider a solution of the Euler-Lagrange equation: $r_1 \otimes s_1 = \lambda_{n_0} u_{n_0} \otimes v_{n_0}$, and let us take as test functions in~\eqref{eq:EL2_SVD} $(r,s)=(u_n,v_n)$, for all $n \ge 1$. We obtain that for all $n \ge 1$, $(\lambda_{n})^2 \le (\lambda_{n_0})^2$ which concludes the proof 
of assertion (vi). Notice that in dimension $N \ge 3$, 
assertion (vi) seemingly  does not hold: the solutions to the Euler-Lagrange
equation that satisfy the second order optimality conditions may not
necessarily be global minimizers.

\subsubsection{Link between the Euler-Lagrange equations and the variational problem}

Properties (iv)-(v)-(vi) above tend to indicate that, at least in the
SVD case, the consideration of the solutions to the
Euler-Lagrange equations is somehow close to the consideration of the
minimization problems.
%%%%%%%%%%%%%%%%%%
%In the nondegenerate case considered (and, in the
%degenerate case, up to some controlled, and inevitable, degree of
%freedom),
%%%%%%%%%%%%%%%%%%%%%%

Indeed, if we assume that at each iteration, non zero solutions of the Euler-Lagrange equations are obtained (of course under the assumption~$g_{n-1} \neq 0$ in~\eqref{eq:SVD_EL_FV}), then the non variational
form of the algorithm, if it converges, %(using the convergence criteria
                                %$\|g_n\|$ is as small as required), 
will eventually provide the correct
decomposition. We however would like to mention two practical
difficulties.

First, it is not clear in practice how to compute the norm  $\|g_n\|$ to check the convergence, since this is in general a high dimensional integral. A more realistic convergence criterion would read: $\|r_n \otimes s_n\|$ 
{\it is small compared to} $\left\|\sum_{k=1}^{n-1} r_k \otimes
  s_k\right\|$. However, using this criterion, it is possible to
erroneously conclude that the algorithm has converged, while a term with
an arbitrarily large contribution has been missed. Indeed, consider again, to 
convey the idea, the case
(\ref{eq:nondeg})-(\ref{eq:nondeg2}). Assume that  the tensor product
$\lambda_2u_2\otimes v_2$
is picked at first iteration (\emph{instead of} the tensor product
$\lambda_1u_1\otimes v_1$ which would be selected by the \emph{variational} version
of the algorithm). Assume similarly that $\lambda_3u_3\otimes v_3$  is
picked at second iteration, and so on and so forth. In such a situation, 
one would then
decide the series $\displaystyle\sum_{n\geq 2}\lambda_nu_n\otimes v_n$
solves the problem, while obsviously it does not. We will show below
(see Section~\ref{sec:resol-EL}) that in the simple fixed-point
procedure we have described above to solve the nonlinear Euler-Lagrange
equations, the fact that $\lambda_1 u_1 \otimes v_1$ is missed, and
never obtained as a solution, may indeed happen as soon as the initial
condition of the iterative procedure has a zero component on the
eigenspace  associated to $\lambda_1$.

Second, without
an additional assumption 
reminiscent of the minimizing character of the solution,
iteratively solving the Euler-Lagrange equations may result in picking the
 tensor products   $\lambda_n u_n
  \otimes v_n$ in an order not appropriate for computational
  efficiency. Such an assumption is present in assertions (v) and (vi). For the illustration, let us indeed consider a SVD 
decomposition
$$g=\sum_{n\ge 1}\,\lambda_n u_n\otimes v_n$$
for some functions $u_n$ and $v_n$ that become highly oscillatory when
$n$ grows. It is clear that  we may
obtain an error in $H^1$ norm that is arbitrarily large at each
iteration of the algorithm. In particular, it may happen (in particular
if smooth functions are chosen as initial guesses for the nonlinear
iteration loop solving the Euler-Lagrange equation) that the highly
oscillatory products are only selected in the latest iterations, although they
 contribute  to the error in a major way. A poor efficiency of the
 algorithm follows. Inevitably, reaching computational efficiency therefore requires
 to account for some
 additional assumptions to select the appropriate solutions among the
 many solutions of the Euler-Lagrange equations.

\medskip

In the spirit of the above discussion, one can notice that
\begin{itemize}
\item (vii) The null solution
$(0,0)$ to the Euler-Lagrange equation~(\ref{eq:SVD_EL_FV}) is generically not isolated within the set of all solutions.
\end{itemize} Indeed, consider a SVD  $\displaystyle
g=\sum_{n \ge 1} \lambda_n \,u_n\otimes v_n$, such that $u_n$ and $v_n$ are non-zero functions for all
$n\ge 1$ (and $\lambda_n >0$). Then,  any $(\lambda_n u_n,v_n)$ is a
solution of the Euler Lagrange equation at the first iteration, and the
norm of the $(\lambda_n u_n,v_n)$ which is selected may be arbitrarily small since the series
$\sum_{n \ge 1} \lambda_n \,u_n\otimes v_n$ converges, and therefore $\| 
\lambda_n u_n\otimes v_n \|$ goes to zero. A similar argument
applies to all iterations of the algorithm. Therefore, a criterion of
convergence of the type  $\|r_n \otimes s_n\|$ {\it is small compared
  to} $\left\|\sum_{k=1}^{n-1} r_k \otimes s_k\right\|$ may again yield
an erroneous conclusion and lead to a  prematurate termination of the iterations.

%%%%%%%%%%%%%%%%%%%%%%%%%%%%%%%%%%%%%%%%%%%%%%%%
% % Question subsidiaire : est-ce qu'il serait mme possible de prendre des ments de la d=E9composition SVD de telle sorte % que la s=E9rie ne converge pas ! Cela montrerait qu'il est compl=E8tement illusoire de 
% dire quoique ce soit sur l'algo % non-variationnel. Mais je crois que ce 
% n'est pas possible... Qu'en dites-vous ?
%%%%%%%%%%%%%%%%%%%%%%%%%%%%%%%%%%%%%%%%%%%%%%%%

\begin{rem}
   Note of course that the relaxation step performed in the orthogonal
  version of the algorithm does not solve any of the above difficulties.
\end{rem}

\subsubsection{Resolution of the Euler-Lagrange equations}\label{sec:resol-EL}

A last comment we would like to make on the SVD case again concerns the
practical implementation of the solution procedure for the Euler-Lagrange
equations. Consider  the discrete case for clarity. The fixed point
procedure then simply writes (for a fixed $n$): at iteration $k \ge 0$, compute two vectors $(R_n^k,S_n^k) \in \R^p \times \R^q$ such that:
\begin{equation}\label{eq:SVD_FP}
\left\{
\begin{array}{l}
(S_n^k)^T S_n^k  R_n^{k+1} = G_{n-1} S_n^k,\\[4pt]
(R_n^{k+1})^T R_n^{k+1} S_n^{k+1} = (G_{n-1})^T R_n^{k+1}.
\end{array}
\right.
\end{equation}
One can check that this procedure is similar to the power method to
compute the largest eigenvalues (and associated eigenvectors) of the
matrix $(G_{n-1})^T G_{n-1}$. Let us explain this. The recursion writes:
$$S^{k+1}=(G^T G) S^k \frac{\|S^k\|^2}{\|GS^k\|^2},$$
where $\|\cdot\|$ here denotes the Euclidean norm and where we have
omitted the subscripts $n$ and $n-1$ for clarity. To study the convergence of this algorithm one can assume that $G$ is actually a diagonal matrix up to a change of coordinate. Indeed, let us introduce the SVD decomposition of $G$: $G=U \Sigma V^T$ where $U$ and $V$ are two orthogonal matrices, and $\Sigma$ is a diagonal matrix with non-negative coefficients. 
Without loss of generality, we may assume that $q \le p$, $U \in
\R^{p\times q}$, $\Sigma \in \R^{q\times q}$, $V \in \R^{q\times q}$ and
$\Sigma_{1,1} \ge \Sigma_{2,2} \ge \ldots \ge \Sigma_{q,q}$. For
simplicity, assume that $\Sigma_{1,1} > \Sigma_{2,2} > 0$. Then, setting
$\tilde{S}^k=V^TS^k$, the recursion reads $\tilde{S}^{k+1}=(\Sigma^T \Sigma) \tilde{S}^k \frac{\|\tilde{S}^k\|^2}{\|\Sigma 
\tilde{S}^k\|^2}$ and the convergence is easy to study. One can check that if the initial condition $S^0$ has a non-zero component along the vector associated to the largest value $\Sigma_{1,1}$, then $S^k$ converges to this vector. The convergence is geometric, with a rate related to $\frac{\Sigma_{2,2}}{\Sigma_{1,1}}$ (at least if the initial condition $S^0$ has a non-zero component along the vector associated to $\Sigma_{2,2}$, otherwise $\Sigma_{2,2}$ should be replaced by the appropriate largest $\Sigma_{k,k}$, with $k > 1$). Of course, if the initial condition is not well chosen (namely, if $S^0$ has a zero component along the vector associated to $\Sigma_{1,1}$), then this algorithm cannot converge to the solution of the variational version of the algorithm.

We would like to mention that this method to compute the SVD of a matrix 
is actually known to poorly perform in practice. More precisely, the
approach is very sensitive to numerical perturbations, see~\cite[Lecture
31]{trefethen-bau-97}) since the condition number of $(G_{n-1})^T
G_{n-1}$ is typically large. Alternative methods exist that compute the 
SVD decomposition, and it would be interesting to use these techniques as guidelines to build more efficient procedures to solve the nonlinear Euler-Lagrange equations~\eqref{eq:LRA_EL}.

%%%%%%%%%%%%%%%%%%%%%%%%%%%%%%%%
\subsection{Euler-Lagrange approach for the Poisson problem}

We now  return to the
solution of the Poisson problem. Our purpose is to see which of the
above mentioned difficulties survive in this case. We shall also see new
difficulties appear.

\medskip

We first observe, on a general note,  that a  property similar to~\eqref{eq:SVD_vrai_ortho} holds in the Poisson case, namely:
\begin{equation}\label{eq:LRA_vrai_ortho}
\int_\Omega  \nabla \left(  \sum_{k=l}^n  r_k \otimes s_k  \right) \cdot
 \nabla (r_n \otimes s_{l-1})=\int_\Omega \nabla \left( \sum_{k=l}^n 
 r_k \otimes s_k \right) \cdot \nabla (r_{l-1} \otimes s_n)=0.
\end{equation}
This, however, does not seem to imply any simple orthogonality property
as~\eqref{eq:SVD_ortho}. In particular, in the Poisson case, it is
generally wrong that, for $n \neq m$, $\int_{\Omega_x} \nabla (r_n
\otimes s_n) \cdot  \nabla (r_m \otimes s_m)=0$.

Next, we remark that none of the properties (i)-(ii)-(iii) holds in the
Poisson case. Likewise, we are not able to characterize the list of
solutions to the Euler-Lagrange equations as we did in (iv)-(v)-(vi).

\medskip

This is for the generic situation, but in order to better demonstrate the connections
between the SVD case above and the  Poisson case,
let us show that, in fact, the Poisson case necessarily embeds all the
difficulties of the SVD case. For this purpose, we consider the original
algorithm (for the Poisson problem) performed for a particular right-hand-side $f=-\Delta g$, namely
\begin{equation}\label{eq:hyp_ortho_g}
\left\{
\begin{array}{l}
\text{$g=\sum_{k=1}^N \alpha_k \phi_k \otimes \psi_k$ where $\alpha_k
  \in \R$,}\\[4pt]
\text{$\phi_k$ (resp. $\psi_k$) are eigenfunctions of}\\[4pt]
\text{the homogeneous Dirichlet operator
  $-\partial_{xx}$ (resp. $-\partial_{yy}$)}\\[4pt]
\text{and satisfy $\forall k, l, \, \int \phi_k \phi_l = \int
  \psi_k \psi_l = \delta_{k,l}$,}
\end{array}
\right.
\end{equation}
where $\delta_{k,l}$ is again the Kronecker symbol. Then, it can be
shown that, as in the SVD case, $r_k \otimes s_k = \alpha_k \phi_k
\otimes \psi_k$ are indeed solution to the Euler-Lagrange
equations~\eqref{eq:LRA_EL_FV}. This suffices to show the non uniqueness 
of
the solution. Furthermore, and in sharp contrast to~(iv),  there even exist
solutions to the Euler Lagrange equations that are not of the above
form.

Here is an example of the latter claim. Consider the case $\phi_1=\psi_1$, associated with an eigenvalue $\lambda_1$ and $\phi_2=\psi_2$, associated with an eigenvalue $\lambda_2\neq \lambda_1$. We suppose $\alpha_k=0$ for $k \ge 2$. We are looking for
$r$ and $s$ solution to the Euler-Lagrange equations
\begin{equation*}
\left\{
\begin{array}{l}
\displaystyle - \int |s|^2   r''   + \int
|s'|^2  r  =\int f  s,\\
\\
\displaystyle- \int |r|^2   s''  + \int
|r'|^2   s  = \int f r.
\end{array}
\right.
\end{equation*}
Then, it can be checked that $r=r_1 \phi_1 + r_2 \phi_2$ and $s=s_1 \psi_1 + s_2 \psi_2$ are solution to the Euler-Lagrange equations, with the following set of parameters: $r_1=1$, $r_2=1/2$, $s_1=2$, $s_2=1$, $\alpha_1=\frac{9 \lambda_1 + \lambda_2}{4
  \lambda_1}$ and $\alpha_2=\frac{2 \lambda_1 + 3 \lambda_2}{2
  \lambda_2}$. Likewise, it is immediate to see that (vii) still holds.
In view of the above remarks, it seems difficult to devise (and, even
more difficult, to prove the convergence of)  efficient iterative
procedures to correctly solve the Euler-Lagrange equation.

%%%%%%%%%%%%%%%%%%%%%%%%%%%%%%%%%%%%%%%%%%%%%%%%
% Ce que je ne sais pas d=E9montrer sur ce cas simple, et qui renforcerait le lien avec SVD, c'est si le probl=E8me de minimisation admet comme minimiseur un des $\alpha_k \phi_k \otimes \psi_k$.
%%%%%%%%%%%%%%%%%%%%%%%%%%%%%%%%%%%%%%%%%%%%%%%%

\subsection{Some numerical experiments and the non self-adjoint case}

We now show some numerical tests. Even though the algorithms presented above have been designed for
solving problems in high dimension, we restrict ourselves to the
two-dimensional case. For numerical results in higher dimension, we refer to~\cite{ammar-mokdad-chinesta-keunings-06}. Moreover, we consider the discrete case mentioned in Section~\ref{sec:SVD}, which writes (compare with~\eqref{eq:lapl}): for a given symmetric positive definite matrix  $D \in \R^{d \times d}$ (which plays the role of the one-dimensional operator $-\partial_{xx}$), and a given matrix $F \in \R^{d \times  d}$ (which plays the role of the right-hand side $f$):
\begin{equation}\label{eq:PbCont}
\text{Find $G \in \R^{d \times d}$ such that } DG + GD = F.
\end{equation}
Here, the dimension $d$ typically corresponds to the number of points used
 to discretize the one-dimensional functions $r_n$ or $s_n$. To this problem is associated the variational problem (compare with~\eqref{eq:lapl_var})
\begin{equation}\label{eq:FV}
\text{Find $G \in \R^{d \times d}$ such that } G= \arg\min_{U \in \R^{d
 \times d}} \left(\frac{DU+UD}{2} -F \right):U,
\end{equation}
where, for two matrices $A,B \in \R^{d \times d}$, $A:B = \sum_{1 \le i,j \le d} A_{i,j} B_{i,j}$. The matrix $G$ is built as a sum of rank one 
matrices $R_k S_k^T$ with $(R_k, S_k) \in (\R^{d})^2$, using the
following Pure Greedy Algorithm (compare with the algorithm presented in
Section~\ref{sec:algo}): {\tt Set $F_0=F$ and at iteration $n \ge 1$,
\begin{enumerate}
\item Find $R_n$ and $S_n$ two vectors in $\R^d$ such that:
\begin{equation}\label{eq:FV_LRA}
(R_n,S_n)= \arg\min_{(R,S) \in (\R^{d})^2} \left(\frac{D (RS^T) +(RS^T)
    D}{2} -F_{n-1} \right):(RS^T).
\end{equation}
\item Set\footnote{In practice, to avoid numerical cancellation, we actually set $F_{n}=F-(D U_n+U_n D)$ where $U_n=\sum_{k=1}^n R_k S_k^T$.} $F_{n}=F_{n-1}-(D R_n S_n^T+ R_n S_n^T D)$.
\item If $\|F_{n}\|>\varepsilon$, proceed to iteration $n+1$. Otherwise stop.
\end{enumerate}
}
As explained in Section~\ref{sec:EL}, Step 1 of the above algorithm
is replaced in practice by the resolution of the associated
Euler-Lagrange equations. This consists in finding  two vectors $R_n$
and $S_n$ in $\R^d$ solution to the nonlinear equations:
\begin{equation}\label{eq:EL_LRA}
\left\{
\begin{array}{l}
\|S_n\|^2 \, D R_n + \|S_n\|_D^2 \, R_n  =F_{n-1} S_n, \\[5pt]
\|R_n\|^2 \, D S_n + \|R_n\|_D^2 \, S_n  =F_{n-1}^T R_n,
\end{array}
\right.
\end{equation}
where, for any vectors $R \in \R^d$, we set $\|R\|_D^2=R^T D R$. This
nonlinear problem is solved by a simple fixed point procedure
(as~\eqref{eq:LRA_FP}). We have observed in practice that choosing a
random vector as an initial condition for the fixed point procedure is
more efficient than taking a given deterministic vector (like $(1,\ldots,1)^T$). This is of course 
related to the convergence properties of the fixed point procedure we discussed in Section~\ref{sec:resol-EL}.

\subsubsection{Convergence of the method}

In this section, we take $D$ diagonal, with $(1,2,\ldots,d)$ on the
diagonal, and a random matrix $F$. The parameter $\varepsilon$ is
$10^{-6}$. We observe that the algorithm always converges. This means that, in practice, the solutions of the Euler-Lagrange equations~\eqref{eq:EL_LRA} 
selected by the fixed point procedure are appropriate.

On Figure~\ref{fig:NRJ}, we plot the energy $\left(\frac{DU_n + U_nD}{2}
  -F\right) : U_n$, where $U_n=\sum_{k=1}^n R_k S_k^T$. We observe
that the energy rapidly decreases and next reaches a plateau. This is a
general feature  that we observe on all the tests we perfomed.
\begin{figure}[htbp]
\begin{center}
\includegraphics[width=10cm,angle=270]{./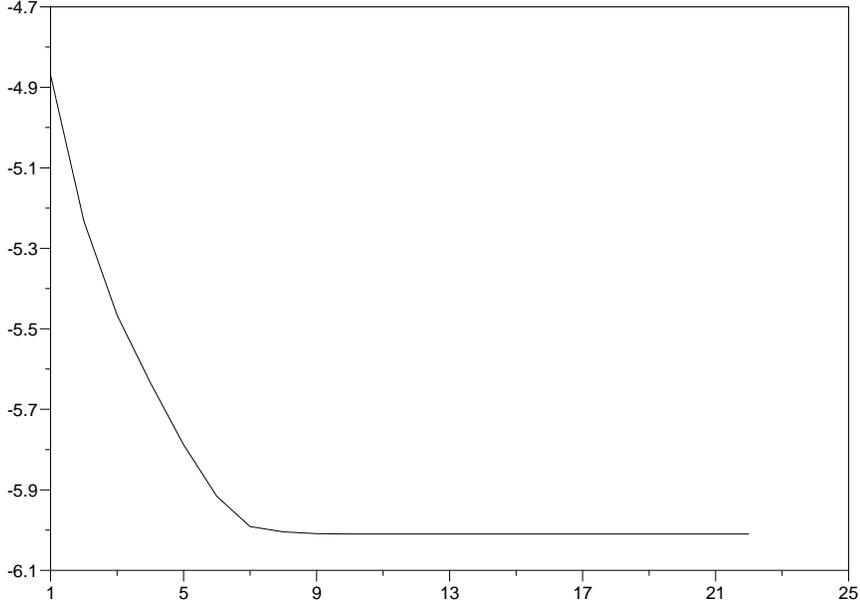}
\end{center}
\caption{Evolution of the energy as a function of iterations ($d=10$, $D=diag([linspace(1,2,d)])$, $\varepsilon=10^{-6}$).}\label{fig:NRJ}
\end{figure}

In Table~\ref{tab:d}, we give the number of iterations necessary for
convergence, as a function of $d$. We observe a linear dependency, which
unfortunately we are unable to explain theoretically.
\begin{table}[htbp]
\begin{center}
\begin{tabular}{|c|ccc|}
\hline
$d$ & 10 & 20 & 30  \\
\hline
Number of iterations & 22-23 & 45-46 & 69-70  \\
\hline
\end{tabular}
\end{center}
\caption{Number of iterations typically needed for convergence as a function of $d$, for various random matrices $F$ ($D=diag([linspace(1,2,d)])$,
$\varepsilon=10^{-6}$).}\label{tab:d}
\end{table}

\subsubsection{The non self-adjoint case}

In~\cite{ammar-mokdad-chinesta-keunings-06}, it is actually proposed to use the Orthogonal Greedy Algorithm for non self adjoint operators.

Consider, for the prototypical case of an advection diffusion equation:
\begin{equation}\label{eq:adv_diff}
\text{Find $g \in H^1_0(\Omega) $ such that }\left\{
\begin{array}{rl}
a \cdot \nabla g -\Delta g  =f &\text{ in $\Omega$},\\
g=0& \text{ on $\partial \Omega$},
\end{array}
\right.
\end{equation}
where $a : \Omega \to \R^2$ is a given smooth velocity field. When $a=
\nabla V$ for some real-valued function $V$, problem
(\ref{eq:adv_diff}) is equivalent to minimizing the energy 
$$
  \frac{1}{2} \int_\Omega |\nabla u|^2 \exp(-V) - \int f u  \exp(-V).$$
 When this is not the case, it is not in general possible to
 recast~\eqref{eq:adv_diff} in terms of  a minimization problem. However, a variational formulation can be written as: Find $g \in H^1_0(\Omega)$ such that, for all $v \in H^1_0(\Omega)$, $$\int_\Omega (a \cdot \nabla g)  v +\nabla g \cdot \nabla v=  \int_\Omega f v.$$
It is proposed in~\cite{ammar-mokdad-chinesta-keunings-06} to use this
variational formulation 
%%%%%%%%%%%%%%%%%%%%%%%%%%%%%%%%%%%%%%%%%%%%%%%%
%(more precisely the Dirac-Frenkel variational principle)
%%%%%%%%%%%%%%%%%%%%%%%%%%%%%%%%%%%%%%%%%%%%%%%%
 in step 1 and 2 of the Orthogonal Greedy Algorithm. The iterations 
 then write:
{\tt set $f_0=f$, and at iteration $n \ge 1$,
\begin{enumerate}
\item Find $r_n \in H^1_0(\Omega_x)$ and $s_n \in H^1_0(\Omega_y)$ such
  that, for all functions $(r,s) \in H^1_0(\Omega_x) \times H^1_0(\Omega_y)$,
\begin{equation}\label{eq:advdiff_EL}
\int_\Omega (a \cdot \nabla (r_n \otimes s_n))  (r_n \otimes s + r \otimes s_n) +\nabla (r_n \otimes s_n) \cdot \nabla (r_n \otimes s + r \otimes 
s_n)=  \int_\Omega f_{n-1} (r_n \otimes s + r \otimes s_n).
\end{equation}
\item Find $u_n \in {\rm Vect}(r_1 \otimes s_1, \ldots, r_n \otimes s_n)$ such that for all $v \in {\rm Vect}(r_1 \otimes s_1, \ldots, r_n \otimes s_n)$
\begin{equation}\label{eq:advdiff_gal}
\int_\Omega (a \cdot \nabla u_n)  v +\nabla u_n \cdot \nabla v  =\int_\Omega f v.
\end{equation}
\item Set $f_{n}=f_{n-1} - (a \cdot \nabla u_n -\Delta u_n)$.
\item If $\|f_{n}\|_{H^-1(\Omega)} \ge \varepsilon$, proceed to iteration $n+1$. Otherwise, stop.
\end{enumerate}
}
The corresponding discrete formulation reads: 
\begin{equation}\label{eq:PbCont_NA}
\text{Find $G \in \R^{d \times d}$ such that } B G + G B^T = F,
\end{equation}
where $B$ is not supposed to be symmetric here (compare
to~\eqref{eq:PbCont}). The numerical method reads: {\tt  Set $F_0=F$ and at iteration $n \ge 
1$,
\begin{enumerate}
\item Find $R_n$ and $S_n$ two vectors in $\R^d$ such that:
\begin{equation}\label{eq:EL_LRA_NA}
\left\{
\begin{array}{l}
\|S_n\|^2 \, B R_n + \|S_n\|_B^2 \, R_n = F_{n-1} S_n, \\[5pt]
\|R_n\|^2 \, B S_n + \|R_n\|_B^2 \, S_n = F_{n-1}^T R_n.
\end{array}
\right.
\end{equation}
\item Set $F_{n}=F_{n-1}-(B R_n S_n^T+ R_n S_n^T B^T)$.
\item If $\|F_{n}\|>\varepsilon$, proceed to iteration $n+1$. Otherwise stop.
\end{enumerate}
}
We consider the case when $B=  D + A$ with $D$ symmetric positive definite, and $A$ antisymmetric, so that we know there exists a unique solution to~\eqref{eq:PbCont_NA}. On the numerical tests we have performed, the 
algorithm seems to  converge. In the absence of any energy minimization
principle, it is however unclear to us
how to prove convergence of this algorithm. 

%%%%%%%%%%%%%%%%%%%%%%%%%%%%%%%%%%%%%%%%%%%%%%%%
% Si on suppose que $B  D + A$ avec $D$ qui est sym=E9trique d=E9finie positive et $A$ qui est antisym=E9trique, il est clair
% que~\eqref{eq:PbContNA} admet une unique solution. En effet, c'est un probl=E8me lin=E9aire, donc il suffit de montrer
% qu'il est injectif. Or, si on a $B G + G B^T 0$, on a donc $G: ( (D+A) G + G (D-A) ) 0$ ce qui donne $G:DG 0$, et % donc $G=0$.
%%%%%%%%%%%%%%%%%%%%%%%%%%%%%%%%%%%%%%%%%%%%%%%%

% \bibliography{biblio_HD}
% \bibliographystyle{plain}

\end{document}